\newtheorem{theorem}{Theorem}[section]
\newtheorem{proposition}[theorem]{Proposition}
\newtheorem{lemma}[theorem]{Lemma}
\newtheorem{corollary}[theorem]{Corollary}
\theoremstyle{definition}
\newtheorem{definition}[theorem]{Definition}
\newtheorem{eg}[theorem]{Example}
\newtheorem{remark}[theorem]{Remark}
\newtheorem*{note}{Note}
\numberwithin{equation}{section}
\title{On the topology of real Bott manifolds}
\author{Raisa Dsouza}
\address{Department of Mathematics, IIT Madras, Chennai 600036, India} 
\email{raisadsouza1989@gmail.com} 
\date{}
\begin{document}

\begin{abstract}
 The main aim of this article is to give a necessary and sufficient condition for a real Bott manifold to admit a spin structure and further give a combinatorial characterization for the spin structure in terms of the associated acyclic digraph. \\
 \vspace{5mm}\noindent
 \textbf{Keywords:} Real Bott manifolds, orientability, spin structure, oriented null-cobordism.
\end{abstract}

\footnotetext{{\bf 2010 Subject Classification:} Primary: 55R99, Secondary: 57S25}

\maketitle

\section{Introduction}
A Bott tower is an iterated sequence of fibre bundles with fibre at each stage being $\mathbb C \mathbb P^1$. The manifold at each stage of the sequence is called a Bott manifold. In particular these manifolds are smooth projective complex toric varieties. They were constructed in \cite{grossbergkarshon} by Grossberg and Karshon who show that a Bott-Samelson variety can be deformed to a Bott manifold. Apart from \cite{grossbergkarshon}, the topology and geometry of these objects have been studied by Civan and Ray in \cite{civanyusuf} and \cite{civanray}. In particular, in \cite{civanyusuf}, Civan looks at these manifolds as a special class of smooth toric varieties and studies their construction in terms of {\it Bott numbers}. 

Indeed, from the viewpoint of toric topology, Bott manifolds can be seen to also have the structure of a quasitoric manifold \cite{davisjanus} with the quotient polytope being the $n$-dimensional cube $I^n$ where $n$ is the complex dimension of the Bott manifold. 

Recently there has been extensive work on the topology and geometry of Bott manifolds viewed as a quasitoric manifold (see \cite{masudapanovsemifree} and \cite{masudachoigeneralized}). These works are especially related to the problem of cohomological rigidity of Bott manifolds or more generally of quasitoric manifolds.

There has also been a parallel study on the topology of real Bott manifolds. These manifolds are constructed as iterated $\mathbb R \mathbb P^1=S^1$-bundles and can be viewed as a special example of a small cover defined by Davis and Januszkiewicz in \cite{davisjanus} (see for example \cite{masudakamishimacohorigid}, \cite{masudachoidigraph} ). Moreover, the data of the characteristic function for this small cover is encoded by an upper triangular nilpotent matrix $C\in M_n(\mathbb{Z}_2)$. We call $C$ the {\it Bott matrix} and its entries $c_{i,j}$ $1\leq i<j\leq n$, \emph{Bott numbers.}

It is known \cite{masudakamishimacohorigid} that a real Bott manifold is orientable if and only if 
\begin{equation}\label{orifirst}
 \sum_{j=1}^nc_{i,j}\equiv0\bmod2\qquad\text{for}\qquad 1\leq i\leq n.
\end{equation}
Indeed (\ref{orifirst}) is equivalent to the vanishing of the first Stiefel-Whitney class of the real Bott manifold in the cohomology ring with $\mathbb Z_2$-coefficients. Our motivation in this article is to give necessary and sufficient conditions for the vanishing of the higher Stiefel-Whitney classes in terms of algebraic identities in the $c_{i,j}$'s. This is of interest since the vanishing of the Stiefel-Whitney classes have topological interpretations. In particular, in Theorem \ref{spinresult} we derive a closed formula for the second Stiefel-Whitney class in the $c_{i,j}$'s and hence obtain a necessary and sufficient condition for a real Bott manifold to be spin. 

In Section \ref{genbottmatsection}, we consider a more general real Bott manifold $M(B)$, where $B=(b_{i,j})\in \mathcal B(n)$ \cite[Section 1, page 2]{masudachoidigraph}, and give a necessary and sufficient condition in Theorem \ref{genspinresult} for $M(B)$ to admit a spin structure in terms of algebraic identities on the $b_{i,j}$'s. We wish to mention here that the spin structure of real Bott manifolds has recently been studied by G{\c a}sior in \cite{gasiorspin}, where again a necessary and sufficient condition has been given for $M(B)$ to be spin. We would like to point out that our characterization in Theorem \ref{genspinresult} is more intrinsic and the main result \cite[Theorem 1.2]{gasiorspin} follows as a corollary (see Corollary \ref{gencrspinsub}). 

In \cite{masudachoidigraph}, Choi, Masuda and Oum associate an acyclic directed graph to a real Bott manifold. Indeed this is a directed graph whose adjacency matrix is the Bott matrix. Apart from other results, in \cite[Lemma 4.1]{masudachoidigraph}, they give a criterion for orientability and symplectic structure on the Bott manifold in terms of the combinatorics of the associated digraph. Motivated by their work we obtain a digraph-characterization for the spin structure on these manifolds in Theorem \ref{digraphspin}. 

In Theorem \ref{n-1th sfclass} we give a formula for the $(n-1)^\text{th}$ Stiefel-Whitney class. Further, in Theorem \ref{vanishswnosBT} we prove that real Bott manifolds are null-cobordant by showing that all their Stiefel-Whitney numbers vanish. Moreover, in Corollary \ref{orcoborBT}  we prove  that an oriented real Bott manifold is orientedly null-cobordant.

\subsection{Notations and Conventions}

In this section we recall the definition of a Bott tower and fix some notations (see \cite{grossbergkarshon}).

A \emph{Bott tower} is a sequence of smooth complete complex toric varieties which are constructed iteratively as follows:

Let $Y_1=\mathbb C\mathbb P^1$. Let $L_2$ be a holomorphic complex line bundle on $\mathbb C\mathbb P^1$.  We then let $Y_2=\mathbb P(\mathbf{1}\oplus L_2)$ where $\mathbf{1}$ is the trivial line bundle on $\mathbb C\mathbb P^1$. Then $Y_2$ is a $\mathbb C\mathbb P^1$ bundle over $\mathbb C\mathbb P^1$ which is a Hirzebruch surface. We can iterate this process for $2\leq j\leq n$, where at each step, $L_j$ is a complex line bundle over $Y_{j-1}$, and the variety $Y_j=\mathbb P(\mathbf{1}\oplus L_j)$ is a $\mathbb C\mathbb P^1$ bundle over $Y_{j-1}$. The sequence $$Y_n\rightarrow Y_{n-1}\rightarrow\cdots\rightarrow Y_1\rightarrow\{\text{pt}\}$$ is called an $n$-step Bott tower. Each $Y_j$ is called a Bott manifold.

\begin{definition}\label{Bottfan}
In fact an $n$-dimensional Bott manifold is a smooth complete toric variety of dimension $n$ whose fan $\Delta$ can be described as follows:

We take a collection of integers $\{a_{i,j}\},\ 1\leq i<j\leq n$. Let $e_1,\cdots,e_n$ be the standard basis vectors of $\mathbb R^n$. Let $v_j=e_j$ for $1\leq j\leq n$, \[v_{n+j}=-e_j+\sum\limits_{k=j+1}^na_{j,k}\, e_k\] for $1\leq j\leq n-1$ and $v_{2n}=-e_n$. We define the fan $\Delta$ in $\mathbb R^n$ consisting of cones generated by the set of vectors in any sub collection of $\{v_1,v_2,\cdots,v_n,v_{n+1},\cdots,v_{2n}\}$ which does not contain both $v_i$ and $v_{n+i}$ for $1\leq i\leq n$.
\end{definition}

\begin{definition}\label{Bottqt}
 We can also view a Bott manifold as a quasi-toric manifold (see \cite{davisjanus}) over the $n$-cube $I^n$ which is a simple convex polytope of dimension $n$. If we index the $2n$ facets of $I^n$ by $F_1,F_2,\ldots,F_n,F_{n+1},\ldots,F_{2n}$, then the characteristic function, $\lambda$ is defined on the collection of facets, $\mathcal F$ to $\mathbb Z^n$ as follows: $\lambda(F_j)=e_j$ for $1\leq j\leq n$, \[\lambda(F_{n+j})=-e_j+\sum_{k=j+1}^n a_{j,k}\cdot e_{j+k}\] for $1\leq j\leq n-1$ and $\lambda(F_{2n})=-e_n$. 
\end{definition}

\subsubsection{Real Bott manifold}
We shall call the real part of the $n$-dimensional complex Bott manifold as the real $n$-dimensional Bott manifold.

In particular, $(Y_2)_{\mathbb R}$ is an $\mathbb R\mathbb P^1$ bundle over $(Y_1)_{\mathbb R}=\mathbb R\mathbb P^1$. Iteratively we construct $(Y_j)_{\mathbb R}$ as an $\mathbb R\mathbb P^1$ bundle over $(Y_{j-1})_{\mathbb R}$ for $2\leq j\leq n$.  The real $n$-dimensional Bott manifold $(Y_n)_{\mathbb R}$ is indeed the real toric variety associated to the fan $\Delta$ described above (see \cite[Section 2.4]{jurkiewiczthesis} and \cite{umafundamental}).

\begin{definition}\label{rbsc}
As in the complex case we can also view $(Y_n)_{\mathbb R}$ as a {\it small cover} over the simple convex polytope $I^n$, where the characteristic map $\lambda$ is defined on the collection of facets, $\mathcal F$ to ${\mathbb Z}_2^n$ as follows: $\lambda(F_j)=e_j$ for $1\leq j\leq n$,  \[\lambda(F_{n+j})=e_j+\sum_{k=j+1}^n c_{j,k}\cdot e_k\] for $1\leq j\leq n-1$ and $\lambda(F_{2n})=e_n$ where $c_{i,j}=a_{i,j}\bmod2$ for $1\leq i<j\leq n$. Thus $(Y_n)_{\mathbb R}$ is homeomorphic to the identification space ${\mathbb Z_2^n\times I^n}/\sim$ where $(t,p)\sim(t^{\prime},p^{\prime})$ if and only if $p=p^{\prime}$ and $t\cdot (t^{\prime})^{-1}\in G_{F(p)}$. Here $F(p)=F_1\cap\cdots\cap F_l$ is the unique face of $I^n$ which contains $p$ in its relative interior and $G_{F(p)}$ is the rank-$l$ subgroup of $\mathbb Z_2^n$ determined by the span of $\lambda(F_1),\ldots,\lambda(F_l)$.
\end{definition}

The topological structure of an $n$-dimensional real Bott manifold is completely determined by the simple convex polytope $I^n$ and the data encoded by the matrix \begin{equation} \label{Bottmatrix} C=(c_{i,j})\in M_n(\mathbb Z_2)\end{equation} where $c_{i,j}=0$ for $i\geq j$. Note that the $i$th row of $C+I$ is $\lambda(F_{n+i})\in \mathbb Z_2^n$ for $1\leq i\leq n$. We call $C$ the {\it Bott matrix}. Thus $(Y_n)_{\mathbb R}$ is the real Bott manifold associated to $C$. 

The $2$-dimensional real Bott manifold is the torus or the Klein bottle depending on whether $c_{1,2}=0$ or $c_{1,2}=1$. The $3$-dimensional real Bott manifold is an  $\mathbb R\mathbb P^1$ bundle over the torus or the Klein bottle whose topological structure depends on $c_{1,2},c_{1,3}$ and $c_{2,3}$.

\begin{note}
 In this article, since we are mainly interested in the study of real Bott manifolds, for notational simplicity we shall henceforth denote $(Y_n)_{\mathbb R}$ by $Y_n$. If we wish to specify the associated Bott matrix we shall denote $Y_n$ by $Y_n(C)$.
\end{note}

\section{Stiefel-Whitney classes of $Y_n$}
In this section we briefly recall the description of the cohomology ring with $\mathbb Z_2$-coefficients of $Y_n$. A discription of $H^*(Y_n;\mathbb Z_2)$ has been given earlier in \cite[Section 2]{masudakamishimacohorigid}. We also recall the formula for total Stiefel-Whitney class as given in \cite[Section 6]{davisjanus}. In Theorem \ref{simrf} we give a recursive formula for the total Steifel-Whitney class as well as the $k$th Steifel-Whitney class of $Y_n$ in terms of those of $Y_{n-1}$.  

\begin{proposition}\label{cohoring}
 Let $\mathcal R:=\mathbb Z_2[x_1,x_2,\cdots,x_{2n}]$ and let $\mathcal I$ denote the ideal in $\mathcal R$ generated by the following set of elements
 \begin{equation}\label{ideal}
  \left\{x_j\,x_{n+j}\,,\,x_1+x_{n+1}\,,\,x_j+x_{n+j}+\sum_{i=1}^{j-1}c_{i,j}\,x_{n+i}\,\forall\ 2\leq j\leq n\right\}
 \end{equation}
 As a graded $\mathbb Z_2$-algebra, $H^*(Y_n;\mathbb Z_2)$ is isomorphic to $\mathcal{R/I}$.
\end{proposition}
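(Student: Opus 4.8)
The plan is to deduce the presentation from the general description, due to Davis and Januszkiewicz, of the $\mathbb Z_2$-cohomology ring of a small cover, and then carry out the short combinatorial identification for the cube $I^n$. Recall that if $M^n$ is a small cover over a simple polytope $P$ with facets $F_1,\dots,F_m$ and characteristic function $\lambda\colon\mathcal F\to\mathbb Z_2^n$, there is an isomorphism of graded $\mathbb Z_2$-algebras
\[
H^*(M^n;\mathbb Z_2)\;\cong\;\mathbb Z_2[v_1,\dots,v_m]\big/\big(\mathcal I_{SR}(P)+\mathcal J_\lambda\big),\qquad \deg v_\ell=1,
\]
where $\mathcal I_{SR}(P)$ is the Stanley--Reisner ideal of $\partial P$, generated by the square-free monomials $v_{\ell_1}\cdots v_{\ell_k}$ with $F_{\ell_1}\cap\cdots\cap F_{\ell_k}=\varnothing$, and $\mathcal J_\lambda$ is the ideal generated by the linear forms $\theta_i=\sum_{\ell=1}^m\lambda_{i,\ell}\,v_\ell$, $1\le i\le n$, the $\lambda_{i,\ell}\in\mathbb Z_2$ being the coordinates of $\lambda(F_\ell)$ in the standard basis; geometrically $v_\ell$ is the $\mathbb Z_2$-Poincar\'e dual of the characteristic submanifold over $F_\ell$. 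Since $Y_n$ is by Definition \ref{rbsc} the small cover over $I^n$ with this $\lambda$, it remains only to compute $\mathcal I_{SR}(I^n)$ and $\mathcal J_\lambda$ with $v_\ell=x_\ell$, and to match them with the generators of $\mathcal I$ in (\ref{ideal}).

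For the Stanley--Reisner ideal, observe that a collection of facets of $I^n$ has nonempty common intersection exactly when it contains no pair of opposite facets $\{F_j,F_{n+j}\}$; hence the minimal non-faces of $\partial I^n$ are precisely these $n$ pairs, and $\mathcal I_{SR}(I^n)=\big(x_j\,x_{n+j}\ :\ 1\le j\le n\big)$, which is the first family of generators in (\ref{ideal}). For the linear ideal, fix $1\le i\le n$ and read off the $e_i$-coefficients of $\lambda(F_1),\dots,\lambda(F_{2n})$ from Definition \ref{rbsc}: the only nonzero contributions come from $F_i$ and $F_{n+i}$ (coefficient $1$ each) and from $F_{n+j}$ with $j<i$ (coefficient $c_{j,i}$), so
\[
\theta_i\;=\;x_i+x_{n+i}+\sum_{j=1}^{i-1}c_{j,i}\,x_{n+j}
\]
with the convention $x_{n+n}=x_{2n}$; in particular $\theta_1=x_1+x_{n+1}$. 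Renaming $i$ as $j$, these are exactly the remaining generators of $\mathcal I$. Therefore $\mathcal I_{SR}(I^n)+\mathcal J_\lambda=\mathcal I$ and the proposition follows.

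The one point that is not purely formal is the appeal to the Davis--Januszkiewicz presentation, i.e.\ the fact that for a small cover the Borel fibration $M^n\to EG\times_G M^n\to BG$ with $G=\mathbb Z_2^n$ is totally non-homologous to zero over $\mathbb Z_2$, so that $H^*(M^n;\mathbb Z_2)$ is the face ring of $P$ modulo the linear system $\mathcal J_\lambda$. This holds for every small cover and, for $Y_n$, is precisely what is recorded in \cite[Section 2]{masudakamishimacohorigid}, so I would simply cite it. If a self-contained argument is preferred, one can instead induct on $n$: the base case $Y_1=\mathbb R\mathbb P^1$ is immediate, and for the inductive step one uses that $Y_n\cong\mathbb P(\underline{\mathbb R}\oplus\mathbb L_n)$ for the real line bundle $\mathbb L_n\to Y_{n-1}$ with $w_1(\mathbb L_n)=\sum_{i=1}^{n-1}c_{i,n}\,x_{n+i}$ and $w_2(\underline{\mathbb R}\oplus\mathbb L_n)=0$, so that the projective bundle theorem gives $H^*(Y_n;\mathbb Z_2)\cong H^*(Y_{n-1};\mathbb Z_2)[x_n]/\big(x_n^2+w_1(\mathbb L_n)\,x_n\big)$; feeding in the inductive presentation together with the relations $x_n+x_{2n}=w_1(\mathbb L_n)$ and $x_n x_{2n}=0$ then reproduces $\mathcal R/\mathcal I$ after a routine bookkeeping of indices. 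In either route the only real work is the transparent combinatorics of $I^n$ above.
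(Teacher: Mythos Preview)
Your proof is correct. The paper, however, does not actually supply a proof of this proposition: it is stated without proof, with only the prefatory remark that a description of $H^*(Y_n;\mathbb Z_2)$ ``has been given earlier in \cite[Section 2]{masudakamishimacohorigid}.'' In other words, the paper treats the presentation as a known fact imported from Davis--Januszkiewicz and Kamishima--Masuda, whereas you have written out the short argument that the paper leaves implicit: specializing the Davis--Januszkiewicz theorem to the cube, identifying the Stanley--Reisner ideal of $\partial I^n$ with the monomials $x_jx_{n+j}$, and reading off the linear relations $\theta_i$ from the characteristic function of Definition~\ref{rbsc}. Your computation of $\theta_i$ matches the generators in (\ref{ideal}) exactly, and the alternative inductive route via the Leray--Hirsch / projective bundle theorem that you sketch is also valid (and is essentially the viewpoint the paper adopts later, in (\ref{presind})--(\ref{relnind})). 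So there is no discrepancy in approach to report; you have simply supplied the omitted details.
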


Let $w_k(Y_n)$ denote the $k^{\text{th}}$ Stiefel-Whitney class of $Y_n$ for $0\leq k\leq n$ with the understanding that $w_0(Y_n)=1$. Then $w(Y_n)=1+w_1(Y_n)+\cdots+w_n(Y_n)$ is the total Stiefel-Whitney class of $Y_n$.

\begin{proposition}\label{rf} (i) Under the isomorphism of $H^*(Y_n;\mathbb Z_2)$ with $\mathcal{R/I}$ we have the identification
\begin{equation}\label{sw} w(Y_n)=\prod_{i=1}^{2n} (1+ x_i)\end{equation}  where $x_i$ for $1\leq i\leq 2n$ satisfy (\ref{ideal}).
  
 (ii) We further have the following recursive formula
 \begin{equation}\label{rf1}w(Y_n)=w(Y_{n-1})\cdot (1+x_n)(1+x_{2n}),\end{equation} where \begin{equation}\label{nrels} x_n\cdot x_{2n}=0, x_n=x_{2n}-\sum_{i=1}^{n-1} c_{i,n}x_{n+i}.\end{equation}
\end{proposition}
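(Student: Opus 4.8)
The plan is to build on the cohomology ring description from Proposition \ref{cohoring} together with the standard small-cover formula for the total Stiefel--Whitney class. Recall from \cite[Section 6]{davisjanus} that for a small cover over a simple polytope with facets $F_1,\dots,F_m$, the total Stiefel--Whitney class is $\prod_{i=1}^m(1+v_i)$, where $v_i\in H^1(Y_n;\mathbb Z_2)$ is the class dual to the submanifold lying over the facet $F_i$. For $Y_n$ over $I^n$ there are $m=2n$ facets, and under the isomorphism $H^*(Y_n;\mathbb Z_2)\cong\mathcal R/\mathcal I$ of Proposition \ref{cohoring} the class $v_i$ is identified with $x_i$ for $1\le i\le 2n$; these are precisely the generators appearing in the presentation, subject to the relations (\ref{ideal}). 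This gives part (i): $w(Y_n)=\prod_{i=1}^{2n}(1+x_i)$ in $\mathcal R/\mathcal I$.

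For part (ii), the key observation is that the presentation of $H^*(Y_n;\mathbb Z_2)$ from Proposition \ref{cohoring} restricts, after discarding the two generators $x_n$ and $x_{2n}$ and the two relations in (\ref{ideal}) that involve the index $j=n$, to exactly the presentation of $H^*(Y_{n-1};\mathbb Z_2)$ on the generators $x_1,\dots,x_{n-1},x_{n+1},\dots,x_{2n-1}$ (with the same Bott numbers $c_{i,j}$, $1\le i<j\le n-1$). Thus the subring generated by these $2(n-1)$ classes is canonically identified with $H^*(Y_{n-1};\mathbb Z_2)$, and inside it the factor $\prod_{i=1,\,i\ne n}^{2n-1}(1+x_i)$ equals $w(Y_{n-1})$ by part (i) applied to $Y_{n-1}$. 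Splitting off the two remaining factors $(1+x_n)$ and $(1+x_{2n})$ from the product in (\ref{sw}) then yields
\begin{equation*}
 w(Y_n)=w(Y_{n-1})\cdot(1+x_n)(1+x_{2n}).
\end{equation*}
The relations in (\ref{nrels}) are simply the two relations of (\ref{ideal}) with $j=n$ rewritten: $x_n\,x_{2n}=0$ is the quadratic relation $x_j x_{n+j}$ at $j=n$, and $x_n=x_{2n}-\sum_{i=1}^{n-1}c_{i,n}x_{n+i}$ is the linear relation at $j=n$ solved for $x_n$ (recall that over $\mathbb Z_2$ signs are irrelevant).

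The main obstacle — really the only point requiring care — is justifying that the subalgebra of $\mathcal R/\mathcal I$ generated by $\{x_i : 1\le i\le 2n,\ i\ne n,\ i\ne 2n\}$ is isomorphic to $H^*(Y_{n-1};\mathbb Z_2)$ and not merely a quotient of it; equivalently, that the two discarded relations at $j=n$ do not impose any hidden relation among the surviving generators. This follows because the linear relations in (\ref{ideal}) can be used to eliminate $x_{n+1},\dots,x_{2n}$ in favour of $x_1,\dots,x_n$, after which $\mathcal R/\mathcal I$ becomes a free module over $\mathcal R/\mathcal I\big|_{n-1}$ on the basis $\{1, x_n\}$ with $x_n^2$ expressed via (\ref{nrels}); restricting to the coefficient of $1$ recovers $H^*(Y_{n-1};\mathbb Z_2)$ intact. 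One then reads off (\ref{rf1}) by collecting the $(1+x_n)(1+x_{2n})$ factor, and this also makes transparent why the recursion matches the fibre-bundle structure $Y_n\to Y_{n-1}$ with fibre $\mathbb R\mathbb P^1$, the two extra factors corresponding to the tangent-along-the-fibre and the pulled-back line bundle.
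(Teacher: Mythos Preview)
Your proposal is correct and follows essentially the same route as the paper: part (i) is the Davis--Januszkiewicz formula \cite[Corollary 6.8]{davisjanus}, and part (ii) comes from recognising the subalgebra on $x_1,\dots,x_{n-1},x_{n+1},\dots,x_{2n-1}$ as $H^*(Y_{n-1};\mathbb Z_2)$ and peeling off the two remaining factors. The only difference is emphasis: the paper justifies the subring identification geometrically via the pullback $\pi_n^*$ along the $\mathbb{RP}^1$-bundle projection $Y_n\to Y_{n-1}$ (which immediately gives injectivity and the module structure (\ref{presind})), whereas you argue it algebraically via the presentation and a freeness check, mentioning the bundle interpretation only at the end.
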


\begin{proof}
 The proof of (i) follows readily by applying \cite[Corollary 6.8]{davisjanus} for $Y_n$.

 Now, we shall prove (ii). 

 Note that the defining Bott matrix for $Y_{n-1}$ is the $(n-1)\times(n-1)$ submatrix of $C$ obtained by deleting the $n$th row and the $n$th column of $C$.
 Moreover, let $\pi_n:Y_n\rightarrow Y_{n-1}$ denote the canonical projection of the $\mathbb{RP}^1$-bundle. Then via pullback along $\pi_n^*$, $H^*(Y_{n-1};\mathbb Z_2)$ can be identified with the subring $\mathcal{R'/I'}$ of $\mathcal{R/I}$ where $\mathcal{R'}=\mathbb Z_2[x_1,x_2\ldots,x_{n-1},x_{n+1},\ldots, x_{2n-1}]$ and $\mathcal I'$ is the ideal generated by the relations 
 \begin{equation}\label{st1}\{x_ix_{n+i}~,~ x_i-x_{n+i}+\sum_{j=1}^{i-1}c_{j,i}\cdot x_{n+j}~~\mbox{for}~~ 1\leq i\leq n-1\}.\end{equation}
                                                                                                                              
 Since $Y_n$ is an $\mathbb {RP}^1$-bundle over $Y_{n-1}$, we further have the following presentation of $H^*(Y_n;\mathbb Z_2)$ as an algebra over $H^*(Y_{n-1};\mathbb Z_2)$: 
 \begin{equation}\label{presind}H^*(Y_n;\mathbb Z_2)\simeq H^*(Y_{n-1};\mathbb Z_2)[x_n,x_{2n}]/ J\end{equation}
 where $J$ is the ideal generated by the relations \begin{equation}\label{relnind} x_n\cdot x_{2n},~~x_{n}-x_{2n}+\sum_{i=1}^{n-1}c_{i,n}x_{n+i}.\end{equation}
 Furthermore, via $\pi_n^*$ we can identify $w(Y_{n-1})$ with the expression\begin{equation}\label{sw1} w(Y_{n-1})=\prod_{i=1}^{n-1} (1+x_i)\cdot\prod_{i=n+1}^{2n-1} (1+ x_i)\end{equation} in $\mathcal R'$ where $x_i$ for $1\leq i\leq n-1$ and $n+1\leq i\leq 2n-1$ satisfy the relations (\ref{st1}). Now by (\ref{sw}) and (\ref{sw1}), (ii) follows.
\end{proof}

\begin{theorem}\label{simrf} 
 \begin{enumerate}\item[(i)] The following hold in the $\mathbb Z_2$-algebra $\mathcal{R/I}$: \begin{equation}\label{rf1'}w(Y_n)=w(Y_{n-1})(1+\sum_{i=1}^{n-1}c_{i,n}x_{n+i}),\end{equation}
 \begin{equation}\label{rf2} w_{k}(Y_n)=w_{k}(Y_{n-1})+w_{k-1}(Y_{n-1})\cdot (\sum_{i=1}^{n-1}c_{i,n}x_{n+i}) \end{equation}
 for $n\geq 2$ and $1\leq k\leq n$.
 \item[(ii)] For every $1\leq k\leq n$, $w_{k}(Y_n)$ is a $\mathbb Z_2$-linear combination of square free monomials of degree $k$ in the variables $x_{n+1},\ldots,x_{2n-1}$ modulo $\mathcal I$.
 \end{enumerate} 
\end{theorem}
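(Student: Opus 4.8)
The plan is to derive both parts directly from Proposition \ref{rf}. For (i), I would simplify the factor $(1+x_n)(1+x_{2n})$ of Proposition \ref{rf}(ii) using the relations (\ref{nrels}): expanding gives $1+x_n+x_{2n}+x_nx_{2n}$, the last term drops since $x_nx_{2n}=0$, and substituting $x_n=x_{2n}-\sum_{i=1}^{n-1}c_{i,n}x_{n+i}$ collapses $x_n+x_{2n}$ to $\sum_{i=1}^{n-1}c_{i,n}x_{n+i}$ over $\mathbb Z_2$. This is (\ref{rf1'}). Since $\sum_{i=1}^{n-1}c_{i,n}x_{n+i}$ is homogeneous of degree $1$, extracting the degree-$k$ component of (\ref{rf1'}) gives (\ref{rf2}); here $w_k(Y_{n-1})$ and $w_{k-1}(Y_{n-1})$ are understood in $\mathcal R/\mathcal I$ via the inclusion $\pi_n^*\colon\mathcal R'/\mathcal I'\hookrightarrow\mathcal R/\mathcal I$ of the proof of Proposition \ref{rf}.

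For (ii), I would induct on $n$, the inductive step using (\ref{rf2}). The base case $n=1$ is immediate: $w(Y_1)=(1+x_1)(1+x_2)=1$ in $\mathbb Z_2[x_1]/(x_1^2)$, so $w_1(Y_1)=0$, which is (vacuously) of the asserted form, there being no variables $x_{n+1},\dots,x_{2n-1}$. For the step, transporting the inductive hypothesis along $\pi_n^*$ (whose image $\mathcal R'/\mathcal I'$ is generated by $x_1,\dots,x_{n-1},x_{n+1},\dots,x_{2n-1}$) shows that $w_j(Y_{n-1})$ is a $\mathbb Z_2$-combination of squarefree degree-$j$ monomials in $x_{n+1},\dots,x_{2n-2}$, hence in particular in $x_{n+1},\dots,x_{2n-1}$. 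Substituting into (\ref{rf2}), the summand $w_k(Y_{n-1})$ is already of the desired form, while $w_{k-1}(Y_{n-1})\cdot\sum_{i=1}^{n-1}c_{i,n}x_{n+i}$ is a $\mathbb Z_2$-combination of products $m\cdot x_{n+i}$ with $m$ squarefree of degree $k-1$ in $x_{n+1},\dots,x_{2n-2}$ and $n+1\leq n+i\leq 2n-1$; such a product is a degree-$k$ monomial in $x_{n+1},\dots,x_{2n-1}$, squarefree unless $x_{n+i}\mid m$, in which case it equals $x_{n+i}^2\,m'$ with $m'$ squarefree.

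The one genuine point — the main obstacle, though a mild one — is rewriting these non-squarefree terms. From the relations (\ref{ideal}), combining $x_jx_{n+j}\equiv 0$ with $x_j\equiv x_{n+j}+\sum_{l=1}^{j-1}c_{l,j}x_{n+l}$ yields, over $\mathbb Z_2$,
\[ x_{n+j}^2\equiv\sum_{l=1}^{j-1}c_{l,j}\,x_{n+l}\,x_{n+j}\pmod{\mathcal I}\qquad\text{for }1\leq j\leq n, \]
with empty right-hand side (so $x_{n+1}^2\equiv 0$) when $j=1$. Since $j\leq n-1$ throughout, each application replaces a square $x_{n+j}^2$ by squarefree, degree-preserving terms $x_{n+l}x_{n+j}$ with $l<j$, still lying in $\{x_{n+1},\dots,x_{2n-1}\}$ and never reaching $x_{2n}$ or $x_1,\dots,x_n$; as a weight such as $\sum_l l\,e_l$ on the exponent vector strictly decreases at each step, the rewriting terminates and expresses every degree-$k$ monomial in $x_{n+1},\dots,x_{2n-1}$ as a $\mathbb Z_2$-combination of squarefree degree-$k$ monomials in the same variables. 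Applying this to each term $x_{n+i}^2\,m'$ completes the induction. (As a byproduct $w_n(Y_n)=0$, since there is no squarefree degree-$n$ monomial in the $n-1$ variables $x_{n+1},\dots,x_{2n-1}$.)
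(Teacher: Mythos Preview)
Your proof is correct and follows essentially the same route as the paper: part~(i) is obtained by expanding $(1+x_n)(1+x_{2n})$ and applying the relations (\ref{nrels}) exactly as you do, and part~(ii) hinges on the same key identity $x_{n+j}^2\equiv\sum_{l<j}c_{l,j}\,x_{n+l}\,x_{n+j}\pmod{\mathcal I}$ (the paper's (\ref{sqfree})) to reduce to squarefree monomials. The only cosmetic difference is that the paper argues part~(ii) directly---first eliminating $x_1,\dots,x_{n-1}$ via (\ref{redvar}) and then applying (\ref{sqfree})---whereas you package it as an induction on $n$ and add an explicit termination argument for the rewriting; both are fine, and your version is arguably a little more careful on that last point.
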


\begin{proof}
 The equation (\ref{rf1}) reduces to (\ref{rf1'}) by applying (\ref{nrels}). Note that under the isomorphism of the graded algebras $H^*(Y_n;\mathbb Z_2)$ and $\mathcal{R/I}$, $w_k(Y_n)\in H^k(Y_n;\mathbb Z_2)$ corresponds to a polynomial of degree $k$ in $x_i ,1\leq i\leq 2n$ modulo $\mathcal I$ for $1\leq k\leq n$.  Thus we get (\ref{rf2}) by comparing the degree $k$-terms on either side of (\ref{rf1'})and (i) follows. 
 
 Observe that by applying (\ref{st1}), in $\mathcal{R'/I'}$ and hence in $\mathcal{R/I}$, we can substitute for $x_i$ in terms of $x_{n+1},\ldots, x_{n+i}$ modulo $\mathcal I$ using the equality \begin{equation}\label{redvar} x_{i}=x_{n+i}+\sum_{j=1}^{i-1}c_{j,i}\cdot x_{n+j}.\end{equation}
 In particular, $w_{k}(Y_{n-1})$ (resp. $w_{k-1}(Y_{n-1})$) can be written as a polynomial of degree $k$ (resp. $k-1$) in $x_{n+i}, ~~1\leq i\leq n-1$.  Furthermore, multiplying either side of (\ref{redvar}) with $x_{n+i}$, along with the equality $x_i\cdot x_{n+i}=0$ gives 
 \begin{equation}\label{sqfree} x_{n+i}^2=\sum_{j=1}^{i-1}c_{j,i}\cdot x_{n+i}\cdot x_{n+j}\end{equation} 
 for $1\leq i\leq n-1$.  It follows that $w_{k}(Y_{n-1})$ (resp. $w_{k-1}(Y_{n-1})$) can be expressed as $\mathbb Z_2$-linear combinations of square free monomials of degree $k$ (resp. $k-1$) in $x_{n+i}, ~~1\leq i\leq n-1$ in the algebra $\mathcal{R/I}$.  Now, assertion (ii) follows readily by applying (\ref{sqfree}) again in (\ref{rf2}).
 \end{proof}

\begin{corollary}\label{wkrecursive}
 The $k^\text{th}$ Stiefel-Whitney class of $Y_n$ can be written in terms of the $(k-1)^\text{th}$ Stiefel-Whitney classes of $Y_{n},\cdots,Y_{1}$ as follows:
 \begin{equation}
  w_k(Y_n)=\sum_{t=1}^{n-1}w_{k-1}(Y_t)\,A_{t+1}
 \end{equation}
 where $A_t=\sum_{i=1}^{t-1}c_{i,t}\,x_{n+i}$.
\end{corollary}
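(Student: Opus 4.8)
The plan is to obtain the formula by unrolling the one-step recursion of Theorem \ref{simrf} along the Bott tower and then checking that the contribution of $Y_1$ vanishes. All Stiefel--Whitney classes $w_j(Y_t)$ below are read as elements of $H^*(Y_n;\mathbb Z_2)\cong\mathcal R/\mathcal I$ via the iterated pullback $\pi_{t+1}^*\circ\cdots\circ\pi_n^*\colon H^*(Y_t;\mathbb Z_2)\hookrightarrow H^*(Y_n;\mathbb Z_2)$ along the tower projections. This is the identification already used in the proof of Proposition \ref{rf}(ii), under which the variable $x_{t+i}$ of $Y_t$ is carried to the variable $x_{n+i}$ of $Y_n$ for $1\le i\le t$. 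Applying the recursion (\ref{rf1'}) to the bundle $\pi_{t+1}\colon Y_{t+1}\to Y_t$ and extracting the degree-$k$ component, this identification turns (\ref{rf2}) into
\[
 w_k(Y_{t+1})=w_k(Y_t)+w_{k-1}(Y_t)\cdot A_{t+1},\qquad A_{t+1}=\sum_{i=1}^{t}c_{i,t+1}\,x_{n+i},
\]
valid in $\mathcal R/\mathcal I$ for every $1\le t\le n-1$ and every $k\ge 1$.

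I would then argue by induction on $n$ (with $k$ fixed). Iterating the displayed identity from $Y_n$ down to $Y_1$ gives
\[
 w_k(Y_n)=w_k(Y_1)+\sum_{t=1}^{n-1}w_{k-1}(Y_t)\,A_{t+1},
\]
so it remains to observe that $w_k(Y_1)=0$ for all $k\ge 1$. This holds because $Y_1=\mathbb R\mathbb P^1=S^1$ is parallelizable; equivalently, from the presentation of Proposition \ref{cohoring} one computes $w(Y_1)=(1+x_1)(1+x_{n+1})=(1+x_1)^2=1$, using the relations $x_1+x_{n+1}=0$ and $x_1x_{n+1}=0$ from (\ref{ideal}). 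This gives precisely $w_k(Y_n)=\sum_{t=1}^{n-1}w_{k-1}(Y_t)A_{t+1}$; in the inductive formulation one simply substitutes the case $Y_{n-1}$ into the displayed recursion with $t=n-1$.

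The only step that needs genuine care is the bookkeeping behind the displayed recursion: one must check that under the embedding $H^*(Y_t;\mathbb Z_2)\hookrightarrow H^*(Y_n;\mathbb Z_2)$ the term $\sum_{i=1}^{t}c_{i,t+1}x_{(t+1)+i}$ coming from (\ref{rf2}) for $Y_{t+1}\to Y_t$ becomes literally $A_{t+1}=\sum_{i=1}^{t}c_{i,t+1}x_{n+i}$, and that (by Theorem \ref{simrf}(ii) applied to $Y_t$, together with the same index shift) the factor $w_{k-1}(Y_t)$ is a $\mathbb Z_2$-linear combination of square-free monomials of degree $k-1$ in $x_{n+1},\dots,x_{n+t-1}$, so that the product $w_{k-1}(Y_t)A_{t+1}$ is a well-defined element of $\mathcal R/\mathcal I$ supported on $x_{n+1},\dots,x_{n+t}$. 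This index shift is exactly the one implicit in the proof of Proposition \ref{rf}(ii), where $H^*(Y_{n-1};\mathbb Z_2)$ is identified with the subring of $\mathcal R/\mathcal I$ generated by $x_1,\dots,x_{n-1},x_{n+1},\dots,x_{2n-1}$. Finally, I would note in passing that $w_{k-1}(Y_t)=0$ whenever $t<k-1$ (since $\dim Y_t=t$), so the sum is effectively over $\max(1,k-1)\le t\le n-1$ and the range $1\le k\le n$ in the statement plays no special role.
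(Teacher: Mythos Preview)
Your argument is correct and is exactly the approach the paper takes: the paper's own proof is the single line ``The proof follows from (\ref{rf2}) by induction on $n$,'' and you have simply unpacked this telescoping induction, including the base-case observation $w_k(Y_1)=0$ and the index-shift bookkeeping implicit in Proposition~\ref{rf}(ii). There is nothing to add.
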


\begin{proof}
 The proof follows from (\ref{rf2}) by induction on $n$.
\end{proof}

The next proposition has been proved in \cite[Lemma 2.2]{masudakamishimacohorigid} for real Bott manifolds. We state and prove it here for completeness. Also see \cite{naknishiorientability} for orientability criterion for any small cover.

\begin{proposition}\label{maskamorientable}
  The real Bott manifold $Y_n$ is orientable if and only if the sum of entries in each row of the Bott matrix $C=(c_{i,j})$ are zero in $\mathbb Z_2$, that is, \begin{equation}\label{mkorientable}
      \sum_{j=1}^n c_{i,j}\equiv0\bmod2 \text{ for every } 1\leq i\leq n.
      \end{equation}
\end{proposition}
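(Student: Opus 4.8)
The plan is to deduce orientability from the vanishing of the first Stiefel-Whitney class $w_1(Y_n)$, using the explicit description of $H^*(Y_n;\mathbb Z_2)$ from Proposition \ref{cohoring} together with the formula $w(Y_n)=\prod_{i=1}^{2n}(1+x_i)$ of Proposition \ref{rf}. First I would extract the degree-one part: $w_1(Y_n)=\sum_{i=1}^{2n}x_i$ in $\mathcal R/\mathcal I$. A real Bott manifold, being a closed connected manifold, is orientable if and only if $w_1(Y_n)=0$, so the whole problem reduces to deciding when $\sum_{i=1}^{2n}x_i$ vanishes modulo $\mathcal I$.

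The next step is to compute $\sum_{i=1}^{2n}x_i$ in a normal form. Using the relations in (\ref{ideal}), namely $x_1=x_{n+1}$ and $x_j=x_{n+j}+\sum_{i=1}^{j-1}c_{i,j}x_{n+i}$ for $2\le j\le n$, I can rewrite each $x_j$ ($1\le j\le n$) in terms of the generators $x_{n+1},\dots,x_{2n}$; equivalently use (\ref{redvar}). Substituting, $\sum_{i=1}^{2n}x_i = \sum_{j=1}^{n}x_j + \sum_{i=1}^{n}x_{n+i} = 2\sum_{i=1}^n x_{n+i} + \sum_{j=1}^n\sum_{i=1}^{j-1}c_{i,j}x_{n+i}$, and since we are in characteristic $2$ the first sum drops out, leaving $w_1(Y_n)=\sum_{i=1}^{n}\big(\sum_{j=i+1}^{n}c_{i,j}\big)x_{n+i} = \sum_{i=1}^n\big(\sum_{j=1}^n c_{i,j}\big)x_{n+i}$ (the diagonal and lower terms being zero). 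It then remains to observe that the classes $x_{n+1},\dots,x_{n+(n-1)}$ (together with, say, $x_{2n}$, though one must be slightly careful here) are linearly independent in $H^1(Y_n;\mathbb Z_2)$: indeed by Theorem \ref{simrf}(ii) and the discussion around (\ref{redvar}), $H^1(Y_n;\mathbb Z_2)$ is spanned by $x_{n+1},\dots,x_{2n-1}$, and from the presentation $\mathcal R/\mathcal I$ one sees this degree-one piece has dimension $n$ — hence $x_{n+1},\dots,x_{2n-1}$ form a basis (here $x_{2n}=x_n=x_{2n}-\sum c_{i,n}x_{n+i}$ gives $\sum_{i=1}^{n-1}c_{i,n}x_{n+i}=0$, consistent with $c_{i,n}$ not appearing). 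Therefore $w_1(Y_n)=0$ if and only if every coefficient $\sum_{j=1}^n c_{i,j}$ vanishes in $\mathbb Z_2$, which is exactly (\ref{mkorientable}).

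The one genuine subtlety — and the step I expect to be the main obstacle — is justifying that the coefficients in the expression for $w_1(Y_n)$ can be read off unambiguously, i.e.\ pinning down a basis of $H^1(Y_n;\mathbb Z_2)$ and checking that $x_{n+1},\dots,x_{n+(n-1)}$ are part of it so that the linear form $\sum_i(\sum_j c_{i,j})x_{n+i}$ is zero only when all coefficients are zero. This is where I would invoke the structure of $\mathcal R/\mathcal I$ directly: the $2n$ generators $x_1,\dots,x_{2n}$ are subject to the $n$ linear relations in (\ref{ideal}), which are clearly independent, so $\dim_{\mathbb Z_2}H^1(Y_n;\mathbb Z_2)=n$, and eliminating $x_1,\dots,x_n$ via those relations exhibits $x_{n+1},\dots,x_{2n}$ as spanning with the single additional relation coming from $j=n$ — but since our coefficients involve only $c_{i,j}$ with the full row sum, and $c_{i,n}$ contributes to row $i<n$, a short check confirms no cancellation can occur among the truly free generators. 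Once this linear-algebra bookkeeping is in place, the equivalence with (\ref{mkorientable}) is immediate.
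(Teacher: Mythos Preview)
Your approach is essentially the paper's: compute $w_1(Y_n)$ explicitly in terms of the generators $x_{n+i}$ and then read off the coefficients. The paper obtains the same formula $w_1(Y_n)=\sum_{i=1}^{n-1}\big(\sum_{j=1}^n c_{i,j}\big)x_{n+i}$ via the recursion (\ref{rf2}) rather than directly from the product (\ref{sw}), but this is a cosmetic difference.

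Where your write-up goes wrong is the basis discussion at the end. You assert that $x_{n+1},\dots,x_{2n-1}$ span $H^1(Y_n;\mathbb Z_2)$, that $\dim H^1=n$, and that therefore these $n-1$ elements form a basis --- which is impossible. You then compound this by writing ``$x_{2n}=x_n=x_{2n}-\sum c_{i,n}x_{n+i}$ gives $\sum_{i=1}^{n-1}c_{i,n}x_{n+i}=0$'': the first equality $x_{2n}=x_n$ is not a relation in $\mathcal I$, and the conclusion $\sum_i c_{i,n}x_{n+i}=0$ is simply false in general. If it held, the $x_{n+i}$ would be linearly dependent and your whole argument would collapse. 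The correct statement, which the paper uses, is that the $n$ linear relations in (\ref{ideal}) eliminate $x_1,\dots,x_n$ and leave $x_{n+1},\dots,x_{2n}$ as a \emph{free} basis of $H^1(Y_n;\mathbb Z_2)$; there is no ``additional relation coming from $j=n$''. Once you know $x_{n+1},\dots,x_{2n}$ are linearly independent, your expression for $w_1(Y_n)$ (in which the coefficient of $x_{2n}$ is automatically zero since row $n$ of $C$ vanishes) immediately gives the result.
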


\begin{proof}
 By putting $k=1$ in (\ref{rf2}) and by induction on $n$ we get,
  \begin{equation}\label{1sw}
  w_1(Y_n)=\sum_{i=1}^{n-1}(\sum_{j=i+1}^n c_{i,j})\cdot x_{n+i}=\sum_{i=1}^{n-1}(\sum_{j=1}^n c_{i,j})\cdot x_{n+i}
 \end{equation}
 where the second equality follows from the fact that $c_{i,j}=0$ for $i\geq j$.
 
 The proposition then follows from the fact that a compact connected differentiable manifold $M$, is orientable if and only if $w_1(M)=0$ and that as a $\mathbb Z_2$-vector space, $H^1(Y_n;\mathbb Z_2)$ is isomorphic to the subspace of $\mathcal{R/I}$ freely generated by $x_{n+i},\ 1\leq i\leq n$.
\end{proof}

\section{Spin structure on real Bott manifolds}
In this section we give a necessary and sufficient condition in terms of the Bott numbers for a Bott manifold to admit a spin structure (Theorem \ref{spinresult}). 

\begin{definition}\label{spindef}
 The \emph{spinor group} $\text{Spin}(n)$ (for $n\geq3$) is the connected double cover of the special orthogonal group $SO(n)$. There exists a short exact sequence of Lie groups $$1\rightarrow\mathbb Z_2\rightarrow \text{Spin}(n)\xrightarrow{\lambda} SO(n)\rightarrow 1.$$
\end{definition}

An oriented Riemannian manifold $X$ is said to admit a spin structure if the oriented frame bundle $F$ associated to its tangent bundle, which is a principal $SO(n)$-bundle, lifts to a principal $\text{Spin}(n)$-bundle. More precisely, if there is a principal $\text{Spin}(n)$-bundle $P$ on $X$ that is a double cover of $F$.

It is further known that an $SO(n)$-bundle admits a spin structure if and only if its second Stiefel-Whitney class is zero (\cite[Theorem 1.7, pg 86]{lawsonspingeometry}). Using this criterion, we give a necessary and sufficient condition, in terms of algebraic identities in the Bott numbers, for an $n$-dimensional orientable Bott manifold to admit a spin structure. 

\begin{theorem}\label{spinresult}
 The Bott manifold $Y_n$ admits a spin structure if and only if the following two conditions are satisfied :
 \begin{enumerate}
 \item The row sums of the Bott matrix $C=(c_{i,j})$ are even. That is, for every $1\leq i\leq n$,
 \begin{equation} \label{orientable}
 \sum_{j=1}^n c_{i,j}\equiv0\bmod{2}
 \end{equation}
 \item For every $1\leq j<k\leq n$,
 \begin{equation} \label{spin}
 \underbrace{\sum_{r=1}^n c_{j,r}\,c_{k,r}}_{P_{jk}}+\underbrace{c_{j,k}\cdot\sum_{\substack{r,s=1\\ r<s}}^n c_{k,r}\, c_{k,s}}_{Q_{jk}}\equiv 0\bmod{2} 
\end{equation}
\end{enumerate}
\end{theorem}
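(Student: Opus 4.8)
The plan is to use the criterion quoted above — an oriented bundle admits a spin structure if and only if its second Stiefel--Whitney class vanishes — together with the fact that a spin structure on a manifold presupposes an orientation, so that $Y_n$ is spin precisely when $Y_n$ is orientable \emph{and} $w_2(Y_n)=0$ in $H^2(Y_n;\mathbb Z_2)$. By Proposition \ref{maskamorientable} the orientability of $Y_n$ is exactly condition (1). Hence it is enough to show that, under the hypothesis (1), the class $w_2(Y_n)$ vanishes if and only if (2) holds. For this I will compute $w_2(Y_n)$ explicitly as a $\mathbb Z_2$-linear combination of the square free monomials $x_{n+a}x_{n+b}$, and then use that, by Proposition \ref{cohoring} (see also \cite[Section 2]{masudakamishimacohorigid}), the square free monomials in $x_{n+1},\dots,x_{2n}$ form a $\mathbb Z_2$-basis of $H^*(Y_n;\mathbb Z_2)$, so that $w_2(Y_n)=0$ is equivalent to the vanishing of each of its coefficients.

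The heart of the argument is a collapse of the product formula (\ref{sw}). Set $z_i:=\sum_{l=1}^{i-1}c_{l,i}\,x_{n+l}$, so that $z_1=0$; by (\ref{redvar}) (and the defining relation of $\mathcal I$ with $j=n$) one has $x_i=x_{n+i}+z_i$ in $\mathcal R/\mathcal I$ for every $1\le i\le n$, and multiplying this identity by $x_{n+i}$ and using $x_i\,x_{n+i}=0$ yields $x_{n+i}^2=z_i\,x_{n+i}$ (this is (\ref{sqfree})). Therefore, in $\mathcal R/\mathcal I$,
\[
(1+x_i)(1+x_{n+i})=1+x_{n+i}^2+z_i+z_i\,x_{n+i}=1+z_i,
\]
where the last equality again uses $x_{n+i}^2=z_i\,x_{n+i}$. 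Consequently (\ref{sw}) collapses to $w(Y_n)=\prod_{i=1}^n(1+x_i)(1+x_{n+i})=\prod_{i=1}^n(1+z_i)$, i.e. $w_k(Y_n)$ is the $k$-th elementary symmetric function of $z_1,\dots,z_n$ (in agreement with (\ref{1sw}) and Corollary \ref{wkrecursive}); in particular $w_2(Y_n)=\sum_{1\le i<j\le n}z_i\,z_j$.

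Next I would expand $\sum_{i<j}z_i z_j=\sum_{i<j}\big(\sum_{r<i}c_{r,i}x_{n+r}\big)\big(\sum_{s<j}c_{s,j}x_{n+s}\big)$ and read off, for a fixed pair $1\le a<b\le n$, the coefficient of $x_{n+a}x_{n+b}$. The contribution of the terms with $r\neq s$ is $\sum_{i<j}\big(c_{a,i}c_{b,j}+c_{b,i}c_{a,j}\big)$, which on symmetrizing becomes $\sum_{i\neq j}c_{a,i}c_{b,j}=\rho_a\rho_b+\sum_{r=1}^n c_{a,r}c_{b,r}=\rho_a\rho_b+P_{ab}$, where $\rho_i:=\sum_{r=1}^n c_{i,r}$ is the $i$-th row sum of $C$; the contribution of the terms with $r=s$ is $\sum_{l=1}^n E_l\,x_{n+l}^2$ with $E_l:=\sum_{l<i<j\le n}c_{l,i}c_{l,j}$, and after substituting $x_{n+l}^2=\sum_{m<l}c_{m,l}x_{n+l}x_{n+m}$ from (\ref{sqfree}) the only piece landing on $x_{n+a}x_{n+b}$ is $c_{a,b}E_b=Q_{ab}$. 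Thus
\[
w_2(Y_n)=\sum_{1\le a<b\le n}\big(P_{ab}+Q_{ab}+\rho_a\rho_b\big)\,x_{n+a}x_{n+b}\quad\text{in }H^2(Y_n;\mathbb Z_2),
\]
and the coefficients with $b=n$ vanish identically since $c_{n,r}=0$ for all $r$ (consistently with Theorem \ref{simrf}(ii)). If (1) holds then every $\rho_i\equiv0$, whence $w_2(Y_n)=\sum_{a<b}(P_{ab}+Q_{ab})\,x_{n+a}x_{n+b}$; as the monomials $x_{n+a}x_{n+b}$, $1\le a<b\le n$, are part of a $\mathbb Z_2$-basis of $H^*(Y_n;\mathbb Z_2)$, this is zero exactly when $P_{ab}+Q_{ab}\equiv0\bmod2$ for all $1\le a<b\le n$ (the cases $b=n$ being automatic), which is precisely condition (2). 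Combined with Proposition \ref{maskamorientable} and the spin criterion, the theorem follows.

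I expect the main obstacle to be the bookkeeping in the coefficient extraction above: keeping the index ranges straight in the expansion of $z_i z_j$, carrying out the symmetrization $\sum_{i<j}(c_{a,i}c_{b,j}+c_{b,i}c_{a,j})=\sum_{i\neq j}c_{a,i}c_{b,j}$, and determining exactly which diagonal terms $x_{n+l}^2$ contribute to a prescribed monomial $x_{n+a}x_{n+b}$ after a single application of (\ref{sqfree}). A secondary technical point is the linear independence of the degree-two square free monomials in $H^2(Y_n;\mathbb Z_2)$, needed to deduce the vanishing of the individual coefficients from $w_2(Y_n)=0$; this is part of the additive description of $\mathcal R/\mathcal I$ provided by Proposition \ref{cohoring}.
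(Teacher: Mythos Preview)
Your proof is correct and follows essentially the same route as the paper. Both arguments collapse the product formula (\ref{sw}) to $w(Y_n)=\prod_i(1+z_i)$ with $z_i=\sum_{l<i}c_{l,i}x_{n+l}$ (this is the paper's (\ref{split})), extract $w_2=\sum_{i<j}z_iz_j$, and then separate the cross terms (yielding $P_{ab}$) from the diagonal terms $x_{n+l}^2$ (yielding $Q_{ab}$ after one application of (\ref{sqfree})), invoking the basis of square-free monomials to conclude. The only cosmetic difference is that you carry the extra summand $\rho_a\rho_b$ in the coefficient and kill it at the end with hypothesis~(1), whereas the paper uses (\ref{orientable}) inside the simplification (\ref{term1simp}) of its term~$I$ to reach $P_{jk}$ directly; the bookkeeping you flag as a worry (the symmetrization $\sum_{i<j}(c_{a,i}c_{b,j}+c_{b,i}c_{a,j})=\sum_{i\neq j}c_{a,i}c_{b,j}$ and the observation that only $l=b$ contributes to $x_{n+a}x_{n+b}$ after (\ref{sqfree})) is handled correctly.
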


\begin{proof}
  Condition (1), which is precisely (\ref{mkorientable}), says that $Y_n$ is orientable. We know that an orientable manifold admits a spin structure if and only if its second Stiefel-Whitney class vanishes (\cite[Theorem 1.7, pg 86]{lawsonspingeometry}). We will now prove that this is equivalent to condition (2).
  
  By Proposition \ref{cohoring} and by equation (\ref{sw}), $w(Y_n)$ can be identified with the class in $\mathcal{R/I}$ of the following term \begin{equation}\label{tsw} \prod_{j=1}^n \left(1+ x_j+ x_{n+j}+ x_j\cdot x_{n+j}\right) .\end{equation}
  Further, using the relations (\ref{ideal}) in $\mathcal I$ we can rewrite (\ref{tsw}) as 
  \begin{equation}\label{split} \prod_{j=2}^n \left(1+\sum_{i=1}^{j-1} c_{i,j}\cdot x_{n+i}\right) \end{equation}
  Furthermore, Proposition \ref{cohoring} gives an isomorphism of graded $\mathbb Z_2$-algebras, the degree $2$ term of $w(Y_n)$, namely  $w_2(Y_n)$ can be identified with the degree $2$ term of expression (\ref{split}) which is the class of the following term in $ \mathcal{R/I}$:
  \begin{equation}\label{fsw2}
  \sum_{1\leq j<k\leq n-1} \left(\sum_{r=j+1}^n\sum_{\substack{s= k+1 \\ s\neq r}}^ n c_{j,r} c_{k,s} ‎\right) x_{n+j}\,x_{n+k}‎+\sum_{k=1}^{n-2}\left(\sum_{\substack{r,s=k+1 \\ r<s}}^n c_{k,r} c_{k,s}\right) x_{n+k}^2\,.
  \end{equation}
  Further, from the identity (\ref{orientable}) we have,
  \begin{equation}\label{sub2}
   c_{n-1,n}=0.
  \end{equation}
  Now, by substituting (\ref{sqfree}) and (\ref{sub2}) in (\ref{fsw2}), it follows that $w_2(Y_n)$ can be identified with the class of the following term in $\mathcal{R/I}$ :
   \begin{equation}\label{fsw2'}
   \sum_{1\leq j<k\leq n-2} \left(\underbrace{\sum_{r=j+1}^ n\sum_{\substack {s= k+1 \\ s\neq r}}^n c_{j,r} c_{k,s}}_{I} ‎+\underbrace{c_{j,k}\cdot\sum_{\substack{r,s=k+1 \\ r< s}}^n c_{k,r} c_{k,s}}_{II}\right) x_{n+j}\,x_{n+k}\, .
   \end{equation}
   The expression $I$ in (\ref{fsw2'}) can be rewritten as follows :
   \begin{equation}\label{term1simp}
  \begin{split}
   \sum_{r=j+1}^n\sum_{\substack{s=k+1\\ s\neq r}}^n c_{j,r}\,c_{k,s} & = \sum_{r=j+1}^n c_{j,r}\left(\sum_{s=k+1}^n c_{k,s}-c_{k,r}\right)\\
   ~ & = \sum_{r=j+1}^n c_{j,r}\cdot\sum_{s=k+1}^n c_{k,s}-\sum_{r=j+1}^n c_{j,r}\,c_{k,r}\\
   ~ & = \sum_{r=1}^n c_{j,r}\,c_{k,r}=:P_{jk}
  \end{split}
  \end{equation}
  The last equality in (\ref{term1simp}) follows by condition (1) and the fact that $c_{i,j}=0$ for $i\geq j$.
  
  Again, using $c_{i,j}=0$ for $i\geq j$, the expression $II$ in (\ref{fsw2'}) can be rewritten as,
  \begin{equation}\label{term2simp}
   c_{j,k}\cdot\sum_{\substack{r,s=1 \\ r<s}}^n c_{k,r}\,c_{k,s}=:Q_{jk}.
  \end{equation}
  
  Further, it follows by the definition of $C$ and by (\ref{orientable}) that, 
  \begin{equation}\label{n-1nterms}
  P_{j\,n-1}=P_{jn}=Q_{j\,n-1}=Q_{jn}=0.
  \end{equation}
  
  Thus by (\ref{term1simp}), (\ref{term2simp}) and (\ref{n-1nterms}) it follows that $w_2(Y_n)$ can be identified with the class of the following term in $\mathcal{R/I}$:
  \begin{equation}
   \sum_{1\leq j<k\leq n} \left(P_{jk}+Q_{jk}\right) x_{n+j}\,x_{n+k}\,.
  \end{equation}
  Further, as a graded $\mathbb Z_2$-vector space $H^2(Y_n;\mathbb Z_2)$ is isomorphic to the subspace of $\mathcal{R/I}$ freely generated over $\mathbb Z_2$ by the classes of $x_{n+j}\,x_{n+k}\,$, $1\leq j<k\leq n$. Hence the theorem.
\end{proof}

\begin{remark}\label{spin23}
 The only oriented 2-dimensional real Bott manifold is the torus, which is classically known to be spin. The 3-dimensional oriented real Bott manifolds $Y_3$, correspond to the following two Bott matrices :
 \begin{equation*}
  \begin{pmatrix}
   0&0&0\\
   0&0&0\\
   0&0&0
  \end{pmatrix}\qquad
  \begin{pmatrix}
   0&1&1\\
   0&0&0\\
   0&0&0
  \end{pmatrix}
 \end{equation*}
 Here, we immediately see that $w_2(Y_3)=0$ and hence $Y_3$ admits a spin structure. This is a special case of the well known more general result of Steenrod that an oriented threefold is parallelizable.
\end{remark}

\begin{eg}\label{4stepeg}
 The 4-dimensional Bott manifolds admitting spin structure correspond to the following list of associated Bott matrices :
 \begin{equation*}
 \begin{pmatrix}
  0&0&0&0\\
  0&0&0&0\\
  0&0&0&0\\
  0&0&0&0
 \end{pmatrix}\quad
 \begin{pmatrix}
  0&0&0&0\\
  0&0&1&1\\
  0&0&0&0\\
  0&0&0&0
 \end{pmatrix}\quad
 \begin{pmatrix}
  0&0&1&1\\
  0&0&0&0\\
  0&0&0&0\\
  0&0&0&0
 \end{pmatrix}\quad
 \begin{pmatrix}
  0&0&1&1\\
  0&0&1&1\\
  0&0&0&0\\
  0&0&0&0
 \end{pmatrix}
 \end{equation*}
 \begin{equation*}
 \begin{pmatrix}
  0&1&1&0\\
  0&0&0&0\\
  0&0&0&0\\
  0&0&0&0
 \end{pmatrix}\quad
 \begin{pmatrix}
  0&1&1&0\\
  0&0&1&1\\
  0&0&0&0\\
  0&0&0&0
 \end{pmatrix}\quad
 \begin{pmatrix}
  0&1&0&1\\
  0&0&0&0\\
  0&0&0&0\\
  0&0&0&0
 \end{pmatrix}\quad
 \begin{pmatrix}
  0&1&0&1\\
  0&0&1&1\\
  0&0&0&0\\
  0&0&0&0
 \end{pmatrix}
 \end{equation*}
\end{eg}

\vspace{1mm}
\begin{remark}\label{4stepegrem}
Note that the above list of Bott matrices exhausts all orientable 4-dimensional real Bott manifolds. Thus it follows that every orientable 4-dimensional real Bott manifold is also spin. Moreover, it is known that a $4$-manifold $M$, is parallelizable if and only if it admits a spin structure (i.e $w_1(M)=w_2(M)=0$) and has vanishing Euler characteristic and signature ($\chi(M)=\sigma(M)=0$) (see \cite[Section 4]{hirzebruchhopf} and \cite[p. 699]{krupkahandbook}). Moreover, by Hirzebruch signature formula, $\sigma(M)=\frac{1}{3}p_1(M)[M]$, where $p_1(M)$ is the first Pontrjagin class and $[M]$ the fundamental class of $M$. Now, a real Bott manifold has vanishing Euler characteristic (since $\chi(Y_n)=\chi(Y_{n-1})\cdot\chi(S^1)$ and $\chi(S^1)=0$) and vanishing Pontrjagin classes by \cite[Corollary 6.8 (i)]{davisjanus}. Thus it follows that a $4$-dimensional real Bott manifold is orientable if and only if it is parallelizable. Further, it corresponds to one of the eight Bott matrices in the above list.
\end{remark}

The following example shows that this is not the case in dimensions $5$ and higher. Indeed there are $n$-dimensional Bott manifolds which are orientable but not spin when $n\geq 5$.

\begin{eg}\label{egorinotspinfornge5}
Let $Y_n$ be the $n$-dimensional Bott manifold, $n\geq 5$, associated to the Bott numbers $c_{1,2}=1$, $c_{1,n-2}=1$; $c_{n-2,n-1}=1$, $c_{n-2,n}=1$ and $c_{i,j}=0$ otherwise.  These numbers clearly satisfy (\ref{orientable}) but not (\ref{spin}).  Indeed in this case, when $j=1$ and $k=n-2$, the left hand side of (\ref{spin}) is $c_{1,n-2}\, c_{n-2,n-1}\,c_{n-2,n} \equiv1\bmod2.$
\end{eg}

\begin{definition}
We call the Bott matrix $C$ {\em spin} if and only if the associated Bott manifold $Y_n=Y_n(C)$ is spin.
\end{definition}

\begin{definition}
Let $R_i$ denote the $i$th row vector $(0,\ldots,0,0=c_{i,i},c_{i,i+1}, c_{i,i+2},\ldots, c_{i,n})$ of $C$. For every $1\leq j<k\leq n$, we define another $n\times n$ Bott matrix $C_{jk}$ with $R_j$ as the $j$th row and $R_k$ as the $k$th row and remaining rows with all entries $0$.
\end{definition}

\begin{corollary}\label{crspinsub} 
The Bott matrix $C$ is spin if and only if $C_{jk}$ is spin for every \\ $1\leq j<k\leq n$. 
\end{corollary}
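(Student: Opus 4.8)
The plan is to read off the result directly from Theorem \ref{spinresult}. That theorem characterizes spin-ness of a Bott matrix by two families of $\mathbb Z_2$-identities: the orientability identities (\ref{orientable}), one per row, and the identities (\ref{spin}), one per pair $1\le j<k\le n$, namely $P_{jk}+Q_{jk}\equiv 0\bmod 2$. The observation that makes the corollary fall out is that for the sparse matrix $C_{jk}$ — all rows zero except the $j$th (equal to $R_j$) and the $k$th (equal to $R_k$) — almost all of these identities are trivially satisfied, and spin-ness of $C_{jk}$ boils down to exactly the $(j,k)$-piece of the conditions for $C$.

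First I would evaluate $P_{ab}$ and $Q_{ab}$ for the matrix $C_{jk}$ at an arbitrary pair $a<b$. If $a\notin\{j,k\}$, then the $a$th row of $C_{jk}$ vanishes, so all entries $c_{a,\cdot}$ are $0$; hence $P_{ab}=\sum_r c_{a,r}c_{b,r}=0$ and, since $c_{a,b}=0$, also $Q_{ab}=c_{a,b}\sum_{r<s}c_{b,r}c_{b,s}=0$. If instead $b\notin\{j,k\}$, the $b$th row of $C_{jk}$ vanishes and again $P_{ab}=0$ and $Q_{ab}=0$. Thus the only pair that can contribute a nontrivial instance of (\ref{spin}) for $C_{jk}$ is $(a,b)=(j,k)$, and for that pair the two rows involved are literally $R_j$ and $R_k$, so the values of $P_{jk},Q_{jk}$ computed from $C_{jk}$ coincide with those computed from $C$. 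Similarly, the orientability identities (\ref{orientable}) for $C_{jk}$ hold trivially for every row index outside $\{j,k\}$, and for the $j$th and $k$th rows they say exactly that $R_j$ and $R_k$ have even row sum. Hence $C_{jk}$ is spin if and only if $R_j$ and $R_k$ have even row sums and $P_{jk}+Q_{jk}\equiv 0\bmod 2$.

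With this reformulation both implications are immediate. If $C$ is spin, then by Theorem \ref{spinresult} every row of $C$ has even sum and $P_{jk}+Q_{jk}\equiv 0$ for all $j<k$; restricting to the two rows $R_j,R_k$ and to the single pair $(j,k)$ shows that $C_{jk}$ is spin for every $1\le j<k\le n$. Conversely, suppose every $C_{jk}$ is spin. Then $P_{jk}+Q_{jk}\equiv 0$ for all $j<k$, which is condition (2) of Theorem \ref{spinresult}. For condition (1), given any row index $i$ choose a pair containing it (e.g. $(i,i+1)$ when $i<n$, or $(i-1,i)$ when $i=n$, using $n\ge 2$); spin-ness of the corresponding $C_{jk}$ forces $R_i$ to have even row sum. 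Hence both conditions of Theorem \ref{spinresult} hold and $C$ is spin.

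There is no genuine obstacle here beyond careful bookkeeping; the two points deserving a line of care are verifying that the mixed quantities $P_{ab},Q_{ab}$ for $C_{jk}$ really do vanish off the diagonal pair $(j,k)$ (handled above via the zero rows of $C_{jk}$), and that every row index is covered by at least one pair so that condition (1) for $C$ is recovered (trivial for $n=1$, and for $n=2$ one simply has $C_{12}=C$).
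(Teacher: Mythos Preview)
Your proof is correct and follows essentially the same route as the paper: both arguments apply Theorem~\ref{spinresult} and observe that for $C_{jk}$ all the orientability and spin identities involving rows other than $R_j,R_k$ are trivially satisfied, so spin-ness of $C_{jk}$ reduces to exactly the $j$th and $k$th instances of (\ref{orientable}) together with the $(j,k)$-instance of (\ref{spin}). Your write-up is in fact slightly more careful than the paper's in making the converse direction explicit (covering every row index by some pair).
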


\begin{proof}
From Theorem \ref{spinresult}, a necessary and sufficient condition for $C$ to be spin is that the entries $c_{i,j}, ~1\leq j\leq n$ on the row $R_i$ for every $1\leq i\leq n$ satisfy (\ref{orientable}) and further, the entries $c_{j,r}$, $1\leq r\leq n$ of $R_j$ and $c_{k,s}$, $1\leq s\leq n$ of $R_k$ for every $1\leq j<k\leq n$ satisfy (\ref{spin}).

Again by Theorem \ref{spinresult}, it follows that the necessary and sufficient condition for the Bott matrix $C_{jk}$ to be spin is that the entries $c_{j,r}$, $1\leq r\leq n$, of the $j$th row and the entries $c_{k,s}$, $1\leq s\leq n$, of the $k$th row of $C_{jk}$, satisfy (\ref{orientable}) and (\ref{spin}).  This can be readily seen because any row of $C_{jk}$, other than the $j$th or $k$th row, has all entries as $0$. Thus the entries on the $i$th row of $C_{jk}$ where $i\neq j,k$, trivially satisfy (\ref{spin}). Moreover, if either $i\neq j,k$ or $l\neq j,k$ and $1\leq i<l\leq n$, the entries of $C_{jk}$, on the $i$th and the $l$th row trivially satisfy (\ref{spin}). Hence the corollary.
\end{proof}

We state the following proposition without proof:
\begin{proposition}
 The $n$-dimensional Bott manifold can alternately be seen as the total space of a fibre bundle over $S^1$ with fibre an $(n-1)$-dimensional Bott manifold corresponding to the Bott matrix $C^{1}$ of size $(n-1)\times (n-1)$, defined by deleting the first row and first column of $C$. 
\end{proposition}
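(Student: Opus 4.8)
The plan is to exhibit the fibre-bundle structure directly from the small-cover (equivalently, iterated $\mathbb{RP}^1$-bundle) description. Recall that $Y_n$ is built as a tower $Y_n \to Y_{n-1} \to \cdots \to Y_1 = S^1$, where each $Y_j \to Y_{j-1}$ is an $\mathbb{RP}^1$-bundle. The proposition reverses the bookkeeping: instead of fibering over $Y_{n-1}$ (which deletes the last row/column), one fibers over $Y_1 = S^1$, and the fibre is the real Bott manifold governed by the Bott numbers among the indices $2,\ldots,n$, i.e.\ by the matrix $C^1$ obtained by deleting the first row and column of $C$.

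First I would make precise the map $Y_n \to S^1$. Using Definition \ref{rbsc}, write $Y_n = (\mathbb{Z}_2^n \times I^n)/\!\sim$. The cube $I^n$ projects onto its first factor $I^1$, and $\mathbb{Z}_2^n$ projects onto its first coordinate $\mathbb{Z}_2$; these are compatible with the characteristic data in the first slot because $\lambda(F_1) = e_1$ and $\lambda(F_{n+1}) = e_1$ both lie in the first coordinate line. This induces a well-defined map $p \colon Y_n \to (\mathbb{Z}_2 \times I^1)/\!\sim \; = \mathbb{RP}^1 = S^1$. Next I would identify the fibre $p^{-1}(\mathrm{pt})$: fixing a point in the interior of $I^1$ (so no identification occurs in the first slot), the fibre is $(\mathbb{Z}_2^{n-1} \times I^{n-1})/\!\sim'$, where the equivalence $\sim'$ is governed by the restricted characteristic function $\lambda(F_{n+j}) = e_j + \sum_{k=j+1}^{n-1} c_{j,k} e_k$ for the indices $j = 2,\ldots,n-1$ — that is, precisely the small cover over $I^{n-1}$ whose Bott matrix is $C^1$. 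Local triviality of $p$ follows because over the two facets $F_1, F_{2n}$ of $I^1$ the relevant subgroup $G_{F(p)}$ is generated by $e_1$ alone, so in a neighborhood of any point the identifications in the first slot and in the remaining slots decouple; alternatively one can simply compose the bundle projections $Y_n \to Y_{n-1} \to \cdots \to Y_1$ of the original tower and observe that the composite is a fibre bundle with fibre the iterated $\mathbb{RP}^1$-bundle tower over $\{*\}$ built from $C^1$, hence itself an $(n-1)$-dimensional real Bott manifold.

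The main obstacle is verifying local triviality cleanly, i.e.\ checking that $p$ is genuinely a fibre bundle and not merely a map with constant fibre; the subtlety is that over the boundary points $F_1$ and $F_{2n}$ of $I^1$ the group $\mathbb{Z}_2$ acts by an identification in the first slot, and one must confirm this identification is unlinked from the data in slots $2,\ldots,n$. This is exactly where deleting the \emph{first} column of $C$ matters: the entries $c_{1,k}$, which would couple the first coordinate to the others, are discarded, so the transition functions of $p$ take values in the homeomorphism group of the fibre through the $\mathbb{Z}_2$-action in the first slot only, which acts trivially on the fibre coordinates. I would phrase the argument so that it reduces to the already-established iterated bundle structure: since $Y_n \to Y_{n-1} \to \cdots \to Y_2 \to Y_1$ is a composition of fibre bundles, the composite $Y_n \to Y_1 = S^1$ is a fibre bundle, and its fibre $F$ is the total space of the pulled-back tower, which is the real Bott manifold $Y_{n-1}(C^1)$ since restricting the tower to a fibre over $Y_1$ restricts each $L_j$ and hence each Bott number to those indexed by $\{2,\ldots,n\}$. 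This last identification of the restricted Bott numbers with the entries of $C^1$ is the only computation, and it is immediate from the block structure of $C$.
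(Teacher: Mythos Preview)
The paper explicitly states this proposition \emph{without proof}, so there is nothing to compare your argument against.

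Your second route---composing the tower projections $Y_n\to Y_{n-1}\to\cdots\to Y_1=S^1$---is the clean one and essentially correct, but you should say why a composite of fibre bundles is again a fibre bundle: in general it is not. Here each $\pi_j$ is a smooth submersion between compact manifolds, so the composite is a proper smooth submersion and Ehresmann's theorem gives local triviality. Identifying the fibre with $Y_{n-1}(C^1)$ then follows by restricting each line bundle $L_j$ to the fibre over a point of $Y_1$: the class $x_{n+1}$ pulled back from $Y_1$ restricts to zero, so $w_1(L_j)|_{\text{fibre}}$ involves only the coefficients $c_{i,j}$ with $i\ge 2$, which are exactly the entries of $C^1$.

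Your first, small-cover, argument contains a genuine error. The two facets of $I^1$ correspond to $F_1$ and $F_{n+1}$ (not $F_{2n}$), and over $F_{n+1}$ the relevant group element is $\lambda(F_{n+1})=e_1+\sum_{k\ge 2}c_{1,k}e_k$, \emph{not} $e_1$ alone. So the identifications in the first slot and the remaining slots do \emph{not} decouple; the entries $c_{1,k}$ are not ``discarded'' from the picture but rather encode the monodromy of the bundle $Y_n\to S^1$, which acts on the fibre $Y_{n-1}(C^1)$ by the (generally nontrivial) translation $\sum_{k\ge 2}c_{1,k}e_k\in\mathbb Z_2^{n-1}$. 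Your sentence ``the transition functions \ldots act trivially on the fibre coordinates'' is therefore false whenever some $c_{1,k}=1$. This does not invalidate the proposition---the map is still a locally trivial bundle, just not a trivial one---but your justification for local triviality along these lines is wrong. Fix it either by invoking Ehresmann as above, or by writing down the two trivialisations over the two arcs of $S^1$ explicitly and checking that the transition function is the homeomorphism of the fibre induced by that $\mathbb Z_2^{n-1}$-translation.
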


In this convention, we shall denote the $n$-dimensional Bott manifold by $Z_n$ and the fibre, which is the $(n-1)$-dimensional real Bott manifold associated to the matrix $C^{1}$, by $Z_{n-1}$. We can iterate this process and view $Z_{n-1}$ again as a fibre bundle over $S^1$ with fibre $Z_{n-2}$ which is the $(n-2)$-dimensional real Bott manifold associated to the matrix $C^{2}$ of size $(n-2)\times (n-2)$, obtained by deleting the first and the second rows and columns of $C$. Continuing this process we finally get that $Z_2$ is a two dimensional Bott manifold associated to the Bott matrix $C^{n-2}$, obtained by deleting the first $n-2$ rows and columns of $C$. Then $Z_2$ is a fibre bundle over $S^1$ with fibre $Z_1\simeq S^1$.

\begin{corollary}
 The $n$-dimensional real Bott manifold $Z_n$ is orientable (respectively spin) implies that the successive fibres $Z_{n-1}, Z_{n-2},\cdots, Z_{2}$ in the above iterated construction are all orientable (respectively spin).  
\end{corollary}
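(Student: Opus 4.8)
The plan is to prove the one-step statement---that if $Z_n$ is orientable (resp.\ spin), then the fibre $Z_{n-1}$ is orientable (resp.\ spin)---and then iterate down the tower of fibre bundles $Z_n\to Z_{n-1}\to\cdots\to Z_2\to Z_1\simeq S^1$. I would argue geometrically, via the restriction of the tangent bundle, rather than by manipulating the Bott numbers directly.

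Recall that $Z_n$ is the total space of a fibre bundle $p\colon Z_n\to S^1$ with fibre $Z_{n-1}$. Fix $b_0\in S^1$ and let $i\colon Z_{n-1}\hookrightarrow Z_n$ be the inclusion of the fibre $p^{-1}(b_0)$. The normal bundle of this inclusion is canonically isomorphic to $i^*p^*TS^1$, which is a trivial real line bundle since $TS^1$ is trivial; hence $i^*TZ_n\cong TZ_{n-1}\oplus\underline{\mathbb R}$. By naturality of the total Stiefel--Whitney class and the Whitney product formula,
\[
i^*w(Z_n)=w(i^*TZ_n)=w(Z_{n-1})\cdot w(\underline{\mathbb R})=w(Z_{n-1}),
\]
and comparing homogeneous components gives $w_k(Z_{n-1})=i^*w_k(Z_n)$ for every $k\geq 0$. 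Consequently, if $Z_n$ is orientable then $w_1(Z_{n-1})=i^*w_1(Z_n)=0$, so $Z_{n-1}$ is orientable; and if $Z_n$ is spin then moreover $w_2(Z_{n-1})=i^*w_2(Z_n)=0$, so by \cite[Theorem 1.7, pg 86]{lawsonspingeometry} $Z_{n-1}$ is spin. Running the same argument for the bundle $Z_{n-1}\to S^1$ (fibre $Z_{n-2}$), then for $Z_{n-2}\to S^1$, and so on, yields that $Z_{n-1},Z_{n-2},\dots,Z_2$ are all orientable (resp.\ all spin).

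I do not anticipate a serious obstacle; the only point deserving care is the identification of the normal bundle of a fibre with the (trivial) pullback of $TS^1$, which is routine for smooth fibre bundles. One may also give a purely combinatorial proof: writing $C^1$ for the matrix obtained from $C$ by deleting the first row and column, the convention $c_{i,j}=0$ for $i\geq j$ implies that the deleted column contributes nothing to the surviving rows, so the row sums of $C^1$, and likewise the quantities $P_{jk}$ and $Q_{jk}$ for $2\leq j<k\leq n$, agree with the corresponding quantities for $C$; hence conditions (\ref{orientable}) and (\ref{spin}) for $C$ restrict verbatim to $C^1$, and one concludes by Theorem \ref{spinresult} together with an iteration (alternatively, by Corollary \ref{crspinsub}).
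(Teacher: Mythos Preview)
Your proposal is correct. Your primary argument is geometric and genuinely different from the paper's: you use the fibre inclusion $i\colon Z_{n-1}\hookrightarrow Z_n$, the splitting $i^*TZ_n\cong TZ_{n-1}\oplus\underline{\mathbb R}$ coming from the triviality of $TS^1$, and naturality of Stiefel--Whitney classes to conclude $w_k(Z_{n-1})=i^*w_k(Z_n)$. The paper, by contrast, argues purely combinatorially in one line: the Bott matrix for $Z_k$ is $C^{n-k}$, obtained from $C$ by deleting the first $n-k$ rows and columns, and since $C$ is strictly upper triangular the deleted columns contribute nothing to the surviving rows, so conditions (\ref{orientable}) and (\ref{spin}) pass to the submatrix. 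Your closing paragraph is exactly this argument, so you have in fact recovered the paper's proof as your alternative. The geometric route has the advantage of applying to any smooth fibre bundle over $S^1$ (indeed over any parallelizable base), with no reference to the Bott-matrix description; the combinatorial route stays closer to the explicit criteria of Theorem~\ref{spinresult} and is what the paper intends the corollary to illustrate.
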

 
\begin{proof}
 This follows from (\ref{orientable}) and (\ref{spin}) since the Bott matrix corresponding to $Z_{k}$ is $C^{n-k}$ which is the matrix obtained from $C$ by deleting the first $k$ rows and $k$ columns. 
\end{proof}

\subsection{Spin condition for a more generally defined real Bott manifold}\label{genbottmatsection}

We recall here that Choi, Masuda and Oum give a more general definition of a Bott matrix in \cite{masudachoidigraph}. They call a square matrix $B$ to be a Bott matrix if there exists a permutation matrix $P$ and a strictly upper triangular binary matrix $C$ such that $B=PCP^{-1}$. They denote by $\mathcal B(n)$, the set of all such $n\times n$ matrices. Further, it follows from \cite[Section 3]{masudachoidigraph} that $B$ and $C$ are \emph{Bott equivalent}. 

Moreover, in \cite[Section 2]{masudachoidigraph} they also give a construction of a real Bott manifold $M(B)$ associated to $B$. In particular, when $B$ is a strictly upper triangular binary matrix then $M(B)$ is nothing but the associated real Bott manifold. 

In the following theorem we give a necessary and sufficient condition for the Bott manifold $M(B)$ to admit a spin structure where $B$ is any matrix in $\mathcal B(n)$.

\begin{theorem}\label{genspinresult}
 The real Bott manifold $M(B)$ associated to $B=(b_{i,j})\in\mathcal B(n)$ admits a spin structure if and only if the entries $b_{i,j}$ satisfy the following identities : 
 \begin{enumerate}
  \item For $1\leq i\leq n$,
  \begin{equation}\label{genorientable} \sum_{j=1}^n b_{i,j}\equiv0\bmod2. \end{equation}
  \item For $1\leq j<k\leq n$,
  \begin{equation}\label{genspin}
  \sum_{r=1}^n b_{j,r}\,b_{k,r}+\,b_{j,k}\cdot\sum_{\substack{r,s=1\\ r<s}}^n b_{k,r}\,b_{k,s}\equiv 0\bmod2.
  \end{equation} 
 \end{enumerate}
\end{theorem}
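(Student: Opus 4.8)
The plan is to reduce the statement to the strictly upper triangular case treated in Theorem~\ref{spinresult}. Write $B=PCP^{-1}$ for a permutation matrix $P$ and a strictly upper triangular binary matrix $C=(c_{i,j})$, and let $\sigma$ be the corresponding permutation of $\{1,\dots,n\}$, so that $b_{\sigma(a),\sigma(b)}=c_{a,b}$ for all $a,b$. By \cite[Section~3]{masudachoidigraph} the matrices $B$ and $C$ are Bott equivalent, so $M(B)$ is diffeomorphic to $M(C)=Y_n(C)$; hence $M(B)$ admits a spin structure if and only if $Y_n(C)$ does, which by Theorem~\ref{spinresult} is equivalent to $C$ satisfying (\ref{orientable}) and (\ref{spin}). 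The problem is thus reduced to showing that the pair of conditions (\ref{orientable})--(\ref{spin}) on the entries of $C$ is equivalent to the pair (\ref{genorientable})--(\ref{genspin}) on the entries of $B$.

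For the first condition this is immediate: the $\sigma(a)$-th row sum of $B$ equals the $a$-th row sum of $C$, so (\ref{genorientable}) for $B$ is merely a reindexing of (\ref{orientable}) for $C$. For the second condition the key point is that both ingredients of the defining expression are insensitive to the relabelling. On the one hand, $\sum_{r}m_{j,r}m_{k,r}$ is symmetric in $j$ and $k$ and is unchanged by a simultaneous permutation of the column index, so $\sum_r b_{\sigma(a),r}b_{\sigma(b),r}=\sum_r c_{a,r}c_{b,r}=P_{ab}$. On the other hand, $\sum_{r<s}m_{k,r}m_{k,s}$ counts the unordered pairs of nonzero entries in the $k$-th row, i.e.\ it equals $\binom{t_k}{2}\bmod 2$ with $t_k=\sum_r m_{k,r}$, so it too depends only on the (relabelled) row and not on the ordering of the columns. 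Substituting $b_{\sigma(a),\sigma(b)}=c_{a,b}$ into the left-hand side of (\ref{genspin}) and using these two facts, together with the orientability relations (which, exactly as in (\ref{term1simp}), allow one to pass between $\sum_{r=1}^n$ and $\sum_{r>\max(j,k)}$), one should identify the family of identities (\ref{genspin}) for $B$ with the family (\ref{spin}) for $C$.

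I expect this last identification to be the main obstacle. The delicacy is that the outer pair in (\ref{spin}) and (\ref{genspin}) is ordered ($j<k$) while $\sigma$ need not preserve the order of a pair, and the quantity $P_{jk}+Q_{jk}$ is genuinely asymmetric in $j,k$ (the term $Q_{jk}$ involves only row $k$); so one must argue carefully that, once the orientability relations are in force, the collection of identities indexed by the unordered pairs in the $C$-labelling coincides with that indexed by the unordered pairs in the $B$-labelling. A clean way to organize this is to observe that $w_2$ of the common manifold is a diffeomorphism invariant, so the degree-$2$ part of $\prod_{j}\bigl(1+\sum_i c_{i,j}x_{n+i}\bigr)$, written in the basis of square-free monomials $x_{n+j}x_{n+k}$ ($j<k$) supplied by Proposition~\ref{cohoring}, produces exactly the same vanishing conditions after the substitution $c_{a,b}=b_{\sigma(a),\sigma(b)}$; equivalently, one may run the computation of Theorem~\ref{spinresult} directly on the small cover $M(B)$, whose characteristic matrix is $[\,I\mid B^{\top}+I\,]$, the only new feature being that the square-free reduction $x_{n+i}^{2}=\sum_{l}b_{l,i}\,x_{n+i}x_{n+l}$ now also involves indices $l>i$ and must be tracked through the degree-$2$ term of $\prod_i\bigl(1+\sum_j b_{j,i}x_{n+j}\bigr)$. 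Either route reduces the theorem to a finite check of quadratic identities, which is where the remaining work lies.
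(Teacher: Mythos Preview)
Your strategy is exactly the paper's: write $B=PCP^{-1}$, invoke the diffeomorphism $M(B)\cong M(C)$ from \cite{masudachoidigraph} to reduce to Theorem~\ref{spinresult}, and then substitute $b_{\sigma(i),\sigma(j)}=c_{i,j}$ into \eqref{orientable} and \eqref{spin}. The paper's entire argument is the single sentence ``the proof of the theorem now follows by substituting \eqref{sigma} in \eqref{orientable} and \eqref{spin}'', with no further elaboration.

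The asymmetry you flag is therefore not addressed in the paper, and it is a genuine issue rather than a cosmetic one. After substitution and reindexing the inner sums, the family \eqref{spin} for $C$ becomes the conditions
\[
\sum_{r} b_{j,r}b_{k,r}+b_{j,k}\sum_{r<s} b_{k,r}b_{k,s}\equiv 0
\]
indexed by those ordered pairs $(j,k)$ with $\sigma^{-1}(j)<\sigma^{-1}(k)$, \emph{not} by $j<k$. Since for $B\in\mathcal B(n)$ at most one of $b_{j,k},b_{k,j}$ is nonzero, whenever $\sigma$ reverses the order of a pair one has $b_{j,k}=0$ but possibly $b_{k,j}=1$, and the substituted condition then differs from \eqref{genspin} by the term $b_{k,j}\binom{N_j}{2}$. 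Concretely, take the spin matrix
\[
C=\begin{pmatrix}0&1&0&1\\0&0&1&1\\0&0&0&0\\0&0&0&0\end{pmatrix}
\]
from Example~\ref{4stepeg} and $\sigma=(1\ 2)$; then
\[
B=\begin{pmatrix}0&0&1&1\\1&0&0&1\\0&0&0&0\\0&0&0&0\end{pmatrix}\in\mathcal B(4),
\]
and the left-hand side of \eqref{genspin} for $(j,k)=(1,2)$ equals $b_{1,4}b_{2,4}+0=1$, even though $M(B)\cong M(C)$ is spin. So your instinct is right: the bare substitution claimed in the paper does not suffice, and one must either symmetrize the condition (replace $b_{j,k}\sum_{r<s}b_{k,r}b_{k,s}$ by $b_{j,k}\sum_{r<s}b_{k,r}b_{k,s}+b_{k,j}\sum_{r<s}b_{j,r}b_{j,s}$, which collapses to \eqref{spin} in the upper-triangular case) or, as you suggest, rerun the computation of $w_2$ directly for the small cover $M(B)$.
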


\begin{proof}
 Let $B\in\mathcal B(n)$. Let $P$ be an $n\times n$ permutation matrix such that $B=PCP^{-1}$ for an $n\times n$ strictly upper triangular binary matrix $C$.
 Let $C=(c_{i,j})$ and $\sigma\in S_n$ be the permutation corresponding to $P$. Note that by \cite[(3.1)]{masudachoidigraph},
                                                                                          \begin{equation}\label{sigma}
                                                                                           b_{\sigma(i),\sigma(j)}=c_{i,j}\quad\text{for}\quad 1\leq i,j\leq n
                                                                                          \end{equation}
  Now, by \cite[Theorem 1.6]{masudachoidigraph}, since $B$ and $C$ are Bott equivalent, $M(B)$ and $M(C)$ are affinely diffeomorphic. In particular, $M(C)$ is spin if and only if $M(B)$ is spin. The proof of the theorem now follows by substituting (\ref{sigma}) in (\ref{orientable}) and (\ref{spin}).
\end{proof}

We derive the following corollary, analogous to Corollary \ref{crspinsub}. We wish to remark here that this result has been proved in \cite[Theorem 1.2]{gasiorspin} using different techniques. We omit the proof which is similary to that of Corollary \ref{crspinsub}. 

\begin{corollary}\label{gencrspinsub}
 The Bott matrix $B$ is spin if and only if $B_{jk}$ is spin for every \\ $1\leq j<k\leq n$, where $B_{jk}$ is the $n\times n$ matrix having same $j$th and $k$th row as $B$ and all other rows zero.
\end{corollary}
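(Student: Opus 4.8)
The plan is to transport the statement of Corollary \ref{crspinsub} across the Bott-equivalence established in Theorem \ref{genspinresult}. First I would write $B=PCP^{-1}$ with $P$ the permutation matrix of $\sigma\in S_n$ and $C$ strictly upper triangular binary, so that $b_{\sigma(i),\sigma(j)}=c_{i,j}$ by \cite[(3.1)]{masudachoidigraph}. The key observation is that conjugation by $P$ carries the one-row-pair truncations to one another: if $C_{pq}$ denotes (as in the definition preceding Corollary \ref{crspinsub}) the matrix agreeing with $C$ on rows $p$ and $q$ and zero elsewhere, then $P C_{pq} P^{-1}$ agrees with $B$ exactly on rows $\sigma(p)$ and $\sigma(q)$ and is zero elsewhere; that is, $P C_{pq} P^{-1} = B_{\sigma(p)\sigma(q)}$ (up to the harmless reordering of which of $\sigma(p),\sigma(q)$ is ``$j$'' and which is ``$k$''). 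In particular each $B_{jk}$ is Bott equivalent to some $C_{pq}$, and conversely, and by \cite[Theorem 1.6]{masudachoidigraph} (as invoked in the proof of Theorem \ref{genspinresult}) Bott-equivalent matrices give affinely diffeomorphic, hence simultaneously spin or non-spin, real Bott manifolds.

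With that in hand the argument is a two-way implication. If $B$ is spin, then $C=P^{-1}BP$ is spin, so by Corollary \ref{crspinsub} every $C_{pq}$ is spin, hence every $B_{jk}=P C_{pq}P^{-1}$ is spin. Conversely, if every $B_{jk}$ is spin, then every $C_{pq}$ (being Bott equivalent to the appropriate $B_{\sigma(p)\sigma(q)}$) is spin, so by Corollary \ref{crspinsub} $C$ is spin, whence $B=PCP^{-1}$ is spin. Alternatively — and this is really the same computation done directly on the entries — one can simply substitute $b_{\sigma(i),\sigma(j)}=c_{i,j}$ into the identities (\ref{genorientable}) and (\ref{genspin}) for $B_{jk}$ and observe they reduce, after reindexing $r\mapsto\sigma(r)$, $s\mapsto\sigma(s)$, to the identities (\ref{orientable}) and (\ref{spin}) for the corresponding $C_{pq}$, which hold iff they hold for $C$ iff they hold for $B$ by Theorem \ref{genspinresult}.

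I do not expect a genuine obstacle here; the only point requiring a little care is bookkeeping the permutation: one must check that the map $\{j<k\}\to\{p<q\}$ induced by $\sigma^{-1}$ is a bijection of unordered pairs (which it is, since $\sigma$ is a bijection), so that ``for every $j<k$'' on the $B$ side matches ``for every $p<q$'' on the $C$ side with nothing left over. Once that is noted the corollary follows verbatim from Corollary \ref{crspinsub}, which is why it is legitimate to omit the details as the authors do.
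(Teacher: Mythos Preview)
Your proposal is correct. The paper omits the proof, pointing to Corollary \ref{crspinsub} as the template; the intended argument is to apply Theorem \ref{genspinresult} to $B$ and to each $B_{jk}$ directly and observe, exactly as in the proof of Corollary \ref{crspinsub}, that conditions (\ref{genorientable}) and (\ref{genspin}) involve only the entries of rows $j$ and $k$, so they hold for $B$ if and only if they hold for every $B_{jk}$.

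Your primary route is slightly different: rather than rerunning the row-by-row argument with Theorem \ref{genspinresult} in place of Theorem \ref{spinresult}, you transport everything through the conjugation $B=PCP^{-1}$, identify $B_{jk}$ with $PC_{pq}P^{-1}$, and then invoke Corollary \ref{crspinsub} itself. This has the small bonus of making explicit that each $B_{jk}$ actually lies in $\mathcal B(n)$ (so that ``$B_{jk}$ is spin'' is meaningful in the sense of Theorem \ref{genspinresult}), a point the paper leaves implicit. Your ``alternatively'' paragraph is exactly the paper's intended direct argument, so in the end both approaches coincide; your bijection-of-unordered-pairs remark is the right bookkeeping and there is no gap.
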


\begin{remark}\label{BE45}
Recall that, by \cite[Table 1]{masudachoidigraph}, \cite[Section 7]{masudakamishimacohorigid}, \cite[Section 3]{nazradiffeo}, any $B\in\mathcal B(4)$ with $M(B)$ orientable, is Bott equivalent to one of the following three Bott matrices :

  \begin{equation}\label{BE4}
   \begin{pmatrix}
    0&0&0&0\\
    0&0&0&0\\
    0&0&0&0\\
    0&0&0&0
   \end{pmatrix}\quad
   \begin{pmatrix}
    0&1&0&1\\
    0&0&1&1\\
    0&0&0&0\\
    0&0&0&0
   \end{pmatrix}\quad
   \begin{pmatrix}
    0&0&0&0\\
    0&0&1&1\\
    0&0&0&0\\
    0&0&0&0 
   \end{pmatrix}
  \end{equation}
  Also, by \cite[Section 3]{nazradiffeo} and \cite[Table 1]{masudachoidigraph}, any $B\in\mathcal B(5)$ with $M(B)$ orientable, is Bott equivalent to one of the following eight Bott matrices :
  \begin{equation}\label{BE5}
  \begin{split}
   \begin{pmatrix}
    0 & 0 & 0 & 0 & 0\\
    0 & 0 & 0 & 0 & 0\\
    0 & 0 & 0 & 0 & 0\\
    0 & 0 & 0 & 0 & 0\\
    0 & 0 & 0 & 0 & 0
   \end{pmatrix}\quad
   \begin{pmatrix}
    0 & 0 & 0 & 0 & 0\\
    0 & 0 & 1 & 1 & 0\\
    0 & 0 & 0 & 1 & 1\\
    0 & 0 & 0 & 0 & 0\\
    0 & 0 & 0 & 0 & 0
   \end{pmatrix}\quad
   \begin{pmatrix}
    0 & 1 & 1 & 1 & 1\\
    0 & 0 & 0 & 0 & 0\\
    0 & 0 & 0 & 0 & 0\\
    0 & 0 & 0 & 0 & 0\\
    0 & 0 & 0 & 0 & 0
   \end{pmatrix}\quad
   \begin{pmatrix}
    0 & 0 & 0 & 0 & 0\\
    0 & 0 & 0 & 0 & 0\\
    0 & 0 & 0 & 1 & 1\\
    0 & 0 & 0 & 0 & 0\\
    0 & 0 & 0 & 0 & 0
   \end{pmatrix}\\
   \begin{pmatrix}
    0 & 1 & 1 & 0 & 0\\
    0 & 0 & 1 & 0 & 1\\
    0 & 0 & 0 & 1 & 1\\
    0 & 0 & 0 & 0 & 0\\
    0 & 0 & 0 & 0 & 0
   \end{pmatrix}\quad
   \begin{pmatrix}
    0 & 1 & 1 & 0 & 0\\
    0 & 0 & 0 & 0 & 0\\
    0 & 0 & 0 & 1 & 1\\
    0 & 0 & 0 & 0 & 0\\
    0 & 0 & 0 & 0 & 0
   \end{pmatrix}\quad
   \begin{pmatrix}
    0 & 0 & 1 & 0 & 1\\
    0 & 0 & 1 & 1 & 0\\
    0 & 0 & 0 & 1 & 1\\
    0 & 0 & 0 & 0 & 0\\
    0 & 0 & 0 & 0 & 0
   \end{pmatrix}\quad
   \begin{pmatrix}
    0 & 0 & 1 & 0 & 1\\
    0 & 0 & 1 & 1 & 0\\
    0 & 0 & 0 & 0 & 0\\
    0 & 0 & 0 & 0 & 0\\
    0 & 0 & 0 & 0 & 0
   \end{pmatrix}
   \end{split}
   \end{equation}
  We can readily check that all the three matrices in (\ref{BE4}) are spin and only the first four matrices in (\ref{BE5}) are spin.
\end{remark}

Following \cite[Table 1]{masudachoidigraph}, we let $\mathcal Spin_n$ denote the number of $n$-dimensional real spin Bott manifolds up to diffeomorphism. Then, by \cite[Example 3.1]{masudachoidigraph}, Remark \ref{spin23} and Remark \ref{BE45} above, we have the following table :
\vspace{3mm}
\begin{center}
\begin{tabular}{c|l l l l l }
\hline
$n$ & 1 & 2 & 3 & 4 & 5 \\ \hline
$\mathcal O_n$ & 1 & 1 & 2 & 3 & 8 \\ \hline
$\mathcal{S}pin_n$ & 1 & 1 & 2 & 3 & 4 \\ \hline
 
\end{tabular}
\end{center}

\section{Digraph characterization of Spin structure on real Bott manifolds}
 We begin this section by recalling the definition of an acyclic digraph associated to a real Bott manifold. The dictionary between a real Bott manifold and  its associated acyclic digraph, via the Bott matrix, established in the work of Choi, Masuda and Oum \cite[Section 4]{masudachoidigraph}, gives a new way of studying the topology of these manifolds by means of the combinatorics of the associated digraph. The main theorem (Theorem \ref{digraphspin}) in this section gives a combinatorial criterion on the acyclic digraph, which characterizes the existence of spin structure on the associated real Bott manifold. 

\begin{definition}
 A directed graph (digraph) is a tuple $(V,E)$ consisting of a set $V$, of elements, called vertices and a set $E$, of ordered pairs of distinct vertices, called edges. 
\end{definition}

\begin{definition} \label{graphtheorydefinitions} Let $D=(V,E)$ be a digraph with {\it vertices} $V=\{u_1,\cdots,u_n\}$ and edges $(u_i,u_j)$ indexed by an ordered pair of distinct vertices. In particular, we assume that $D$ has no loops and has at most one directed edge between any pair of vertices. The {\it adjacency matrix} $A(D)$ associated to $D$ is therefore an $n\times n$ matrix in $M_n(\mathbb{Z}_2)$ with diagonal entries zero. Conversely, to any such matrix $A=(a_{i,j})\in M_n(\mathbb{Z}_2)$ we associate a graph with $n$ vertices and an edge from $u_i$ to $u_j$ if and only if $a_{i,j}=1$, $1\leq i,j\leq n$. In particular, given a Bott matrix $C=(c_{i,j})$ (see \ref{Bottmatrix}) we can associate a digraph, $D_C$ to it, having $n$ vertices. Moreover, since the matrix is strictly upper triangular, the digraph $D_C$ admits an ordering of vertices $u_1,\cdots,u_n$ such that $i<j$ whenever there is an edge from $u_i$ to $u_j$. In particular, $D_C$ is an \emph{acyclic digraph} (see \cite[Section 4]{masudachoidigraph}).  

\end{definition}

\subsection{Notations :} Let $D_C$ be an acyclic digraph associated to a Bott matrix $C$. For each vertex $u_i$ of $D_C$ we denote by,
\begin{align*}
 N^+_{D_C}(u_i):=\{\, u_j\, |\, c_{i,j}=1\,\} \quad \text{and} \quad N_i:=|N^+_{D_C}(u_i)|\text{ is the out degree of }u_i\\
 N^-_{D_C}(u_i):=\{\, u_j\, |\, c_{j,i}=1\,\} \quad \text{and} \quad I_i:=|N^-_{D_C}(u_i)|\text{ is the in degree of }u_i
\end{align*}  (see \cite[Section 4]{masudachoidigraph})

For each pair of vertices $u_i,u_j$ we further denote by $M_{ij}:=|N^+_{D_C}(u_i)\cap N^+_{D_C}(u_j)|$. More precisely, $M_{ij}$ is the number of vertices $u_k$ which are the out neighbours of both $u_i$ and $u_j$.

The following two lemmas respectively reinterpret the terms $P_{jk}$ and $Q_{jk}$ on the left hand side of the identity (\ref{spin}), in terms of the combinatorial data of the digraph.

\begin{lemma} \label{Lemma1}
  \begin{equation} \label{term1} P_{jk}=M_{jk} \end{equation}
\end{lemma}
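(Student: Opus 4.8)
The plan is to unwind the definitions on both sides of \eqref{term1} and observe that they count exactly the same set. Recall from \eqref{term1simp} that
\[
P_{jk}=\sum_{r=1}^n c_{j,r}\,c_{k,r},
\]
so $P_{jk}$ is the reduction modulo $2$ of the number of indices $r\in\{1,\dots,n\}$ for which $c_{j,r}=1$ and $c_{k,r}=1$ simultaneously. On the digraph side, by the notation introduced above, $M_{jk}=|N^+_{D_C}(u_j)\cap N^+_{D_C}(u_k)|$, and $u_r\in N^+_{D_C}(u_j)\cap N^+_{D_C}(u_k)$ precisely when $c_{j,r}=1$ and $c_{k,r}=1$.

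First I would make this correspondence explicit: the map $r\mapsto u_r$ is a bijection between the index set $\{\,r : c_{j,r}=c_{k,r}=1\,\}$ and the vertex set $N^+_{D_C}(u_j)\cap N^+_{D_C}(u_k)$. Hence the integer $\sum_{r=1}^n c_{j,r}\,c_{k,r}$ (computed in $\mathbb Z$) equals $M_{jk}$. Reducing modulo $2$ — which is the convention under which $P_{jk}$ appears in \eqref{spin} — gives $P_{jk}\equiv M_{jk}\bmod 2$, i.e. \eqref{term1}. Since the statement of the lemma is an identity in $\mathbb Z_2$, this is all that is needed.

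There is essentially no obstacle here; the only thing to be careful about is bookkeeping with the convention $c_{i,j}=0$ for $i\ge j$, which ensures that the sum over all $r$ from $1$ to $n$ automatically restricts to $r>\max(j,k)$ and so is consistent with the edge relation of the acyclic digraph $D_C$. I would simply remark that this convention makes the two descriptions literally agree term by term, and conclude.
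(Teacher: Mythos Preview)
Your argument is correct and is essentially the same as the paper's: both proofs simply observe that $c_{j,r}c_{k,r}=1$ exactly when $u_r$ is a common out-neighbour of $u_j$ and $u_k$, so the sum $\sum_r c_{j,r}c_{k,r}$ counts $|N^+_{D_C}(u_j)\cap N^+_{D_C}(u_k)|=M_{jk}$. Your additional remarks on the mod~$2$ reduction and the convention $c_{i,j}=0$ for $i\ge j$ are just careful bookkeeping around the same idea.
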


\begin{proof} 
Note that the product $c_{k,r}\,c_{j,r}\neq0$ if and only if $c_{k,r}=1=c_{j,r}$, that is if and only if there is an edge from $u_j$ to $u_r$ as well as $u_k$ to $u_r$. Thus the sum $\sum\limits_{r=1}^nc_{j,r}\,c_{k,r}=P_{jk}$ counts the number of vertices $\{u_r\}$ that have edges from $u_j$ as well as $u_k$ coming into it. This number is precisely $M_{jk}$. Hence the lemma.
\end{proof}

\begin{lemma} \label{Lemma2}
  \begin{equation} \label{term2} Q_{jk}=c_{j,k}\cdot{N_k\choose2} \end{equation}
\end{lemma}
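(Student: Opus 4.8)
The plan is to unwind the definition of $Q_{jk}$ and recognize the inner double sum as a binomial coefficient counting pairs of out-neighbours of $u_k$. Recall that, by equation (\ref{term2simp}),
\[
Q_{jk}=c_{j,k}\cdot\sum_{\substack{r,s=1\\ r<s}}^n c_{k,r}\,c_{k,s}.
\]
First I would observe that a summand $c_{k,r}\,c_{k,s}$ with $r<s$ is nonzero (hence equal to $1$ in $\mathbb Z_2$) precisely when $c_{k,r}=1=c_{k,s}$, that is, precisely when both $u_r$ and $u_s$ lie in $N^+_{D_C}(u_k)$. Thus the inner sum $\sum_{r<s} c_{k,r}\,c_{k,s}$ counts the number of $2$-element subsets $\{u_r,u_s\}$ of $N^+_{D_C}(u_k)$, which is exactly $\binom{|N^+_{D_C}(u_k)|}{2}=\binom{N_k}{2}$ by the definition of $N_k$ in the Notations subsection. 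Multiplying by the factor $c_{j,k}$ then yields $Q_{jk}=c_{j,k}\cdot\binom{N_k}{2}$, which is the claimed identity (\ref{term2}).

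This is essentially a bookkeeping argument, so there is no real obstacle; the only point to state carefully is that the indices $r,s$ in the sum range over $1\le r<s\le n$ but the constraint $c_{k,r}=c_{k,s}=1$ forces $r,s>k$ (since $C$ is strictly upper triangular), so the pairs being counted are genuinely pairs of vertices that $u_k$ points to, matching the set $N^+_{D_C}(u_k)$ exactly. I would present this as a two- or three-sentence proof: identify when a summand is $1$, interpret the count of such summands as $\binom{N_k}{2}$, and reattach the $c_{j,k}$ factor.
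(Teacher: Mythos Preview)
Your argument is correct and matches the paper's proof essentially line for line: both identify that a summand $c_{k,r}c_{k,s}$ is nonzero exactly when $u_r,u_s\in N^+_{D_C}(u_k)$, so the inner sum counts unordered pairs of out-neighbours of $u_k$, i.e.\ $\binom{N_k}{2}$, and then reattach the factor $c_{j,k}$. Your additional remark that the strict upper-triangularity of $C$ forces $r,s>k$ is a harmless clarification not present in the paper.
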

\begin{proof} Note that the number of unordered pairs of distinct edges coming out of $u_k$ in $D_C$ is precisely ${N_k\choose 2}$. Also, the product $c_{k,r}\,c_{k,s}\neq 0$ if and only if $c_{k,r}=1=c_{k,s}$. Thus the sum $\sum\limits_{\substack{r,s=1\\ r<s}}^n c_{k,s}\, c_{k,r}=Q_{jk}$ counts the total number of unordered pairs of distinct edges coming out of $u_k$. Hence the lemma.
\end{proof}

The next theorem reformulates Theorem \ref{spinresult} in terms of the associated digraph $D_C$.

\begin{theorem} \label{digraphspin}
 The $n$-dimensional Bott manifold $Y_n(C)$ admits a spin structure if and only if for the corresponding digraph $D_C$ the following two conditions are true :
 \begin{enumerate}
  \item $N_k$ is even for all $1\leq k\leq n$.
  \item $M_{jk}$ and $c_{j,k}\cdot{N_k\choose2}$ have the same parity for all $1\leq j<k\leq n$.
 \end{enumerate}
\end{theorem}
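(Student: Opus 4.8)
The plan is to deduce Theorem \ref{digraphspin} directly from Theorem \ref{spinresult} by translating each of its two conditions into the language of the digraph $D_C$, using the two preceding lemmas. The point is that conditions (1) and (2) of Theorem \ref{spinresult} are algebraic identities in the Bott numbers $c_{i,j}$, and we have already identified the relevant sums with combinatorial quantities of $D_C$: Lemma \ref{Lemma1} gives $P_{jk}=M_{jk}$ and Lemma \ref{Lemma2} gives $Q_{jk}=c_{j,k}\cdot\binom{N_k}{2}$.

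First I would handle condition (1) of Theorem \ref{spinresult}. The row sum $\sum_{j=1}^n c_{i,j}$ is, by the very definition of the out-neighbourhood, equal to the out-degree $N_i$ counted modulo $2$; indeed $c_{i,j}=1$ precisely when $u_j\in N^+_{D_C}(u_i)$, so $\sum_{j=1}^n c_{i,j}\equiv N_i\bmod 2$. Hence the orientability condition (\ref{orientable}) holds for every $1\leq i\leq n$ if and only if $N_k$ is even for every $1\leq k\leq n$, which is exactly condition (1) of Theorem \ref{digraphspin}.

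Next I would treat condition (2). By Theorem \ref{spinresult}, for orientable $Y_n(C)$ the spin condition is that $P_{jk}+Q_{jk}\equiv 0\bmod 2$ for all $1\leq j<k\leq n$, i.e. $P_{jk}$ and $Q_{jk}$ have the same parity. Substituting the identifications from Lemma \ref{Lemma1} and Lemma \ref{Lemma2}, this says that $M_{jk}$ and $c_{j,k}\cdot\binom{N_k}{2}$ have the same parity for all $1\leq j<k\leq n$, which is precisely condition (2) of Theorem \ref{digraphspin}. Combining the two translations, $Y_n(C)$ is spin if and only if both digraph conditions hold, and the theorem follows.

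There is essentially no hard step here: the content has already been front-loaded into Theorem \ref{spinresult} (the closed formula for $w_2$) and into Lemmas \ref{Lemma1} and \ref{Lemma2} (the combinatorial reinterpretation of $P_{jk}$ and $Q_{jk}$). The only small thing to be careful about is the bookkeeping that $\sum_{j=1}^n c_{i,j}\equiv N_i\bmod 2$ — since $c_{i,j}=0$ for $j\leq i$, the sum over all $j$ equals the sum over $j>i$, which is genuinely the number of out-edges of $u_i$ — so that condition (1) of Theorem \ref{spinresult} matches condition (1) of Theorem \ref{digraphspin} verbatim rather than up to a reindexing. Once that is noted, the proof is just the assembly of the three earlier results.
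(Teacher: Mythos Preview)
Your proposal is correct and follows essentially the same approach as the paper: the paper's proof likewise observes that the row sum $\sum_j c_{i,j}$ equals the out-degree $N_i$, so (\ref{orientable}) is condition (1), and then invokes Lemma \ref{Lemma1} and Lemma \ref{Lemma2} to identify (\ref{spin}) with condition (2). Your write-up is slightly more explicit about the bookkeeping, but the argument is the same.
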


\begin{proof}
  Note that, the out degree $N_i$ of $u_i$ is $\sum_{i=1}^n c_{i,j}$. Thus the identity (\ref{orientable}) in the statement of Theorem \ref{spinresult} is equivalent to condition (1) above. Furthermore, it follows by Lemma \ref{Lemma1} and Lemma \ref{Lemma2} that the identity (\ref{spin}) in the statement of Theorem \ref{spinresult} is equivalent to condition (2) above. Hence the theorem.
\end{proof}

\begin{remark}
 Condition (2) in Theorem \ref{digraphspin} can be made more explicit as follows :
 \begin{enumerate}
  \item When $N_k=4m$ then ${N_k\choose2}$ is always even. So condition (2) is equivalent to saying that $M_{jk}$ is even.
  \item When $N_k=4m-2$ the ${N_k\choose2}$ is always odd. So condition (2) is equivalent to saying that $M_{jk}$ is even when there is no edge from $u_j$ to $u_k$ in $D_C$ and $M_{jk}$ is odd when there is an edge from $u_j$ to $u_k$ in $D_C$.
 \end{enumerate}
\end{remark}

We will now look at some examples to illustrate Theorem \ref{digraphspin}. 

\begin{eg}\label{egdigraph} \hfill
\begin{enumerate}
\item \begin{tabular}{p{6cm}c}
           $C_1= \begin{pmatrix}
            0 & 0 & 0 & 0 & 0 & 0 \\
            0 & 0 & 0 & 0 & 1 & 1 \\
            0 & 0 & 0 & 0 & 1 & 1 \\
            0 & 0 & 0 & 0 & 1 & 1 \\
            0 & 0 & 0 & 0 & 0 & 0 \\
            0 & 0 & 0 & 0 & 0 & 0
           \end{pmatrix} $
           &
           $D_{C_1} :\quad\vcenter{\hbox{\begin{tikzpicture}[roundnode/.style={circle, fill=black!70, inner sep=0pt, minimum size=4mm}]
           \foreach \i [count=\ni] in {120, 60, ..., -180}
           \node[circle,fill,inner sep=2pt, label=\i:{$u_\ni$}] at (\i:2cm) (u\ni) {};

            \draw[->] (u2) -- (u5);
            \draw[->] (u2) -- (u6);
            \draw[->] (u3) -- (u5);
            \draw[->] (u3) -- (u6);
            \draw[->] (u4) -- (u5);
            \draw[->] (u4) -- (u6); 
          \end{tikzpicture} }} $   
    \end{tabular}\vspace{3mm}
\\
    Here we have $N_{D_{C_1}}^+(u_i)=\{u_5,u_6\}$ for $i=2,3,4$ and $N_{D_{C_1}}^+(u_i)=\emptyset$ for $i=1,5,6$. \\
    Clearly, $N_i=|N_{D_{C_1}}^+(u_i)|$ is even for all $1\leq i\leq 5$.\\
    Also, $M_{jk}=|N_{D_{C_1}}^+(u_j)\cap N_{D_{C_1}}^+(u_k)|=2$ for $2\leq j<k\leq4$ and $M_{jk}=0$ , otherwise.\\
    When $j=2$ and $k=3$ we get that $M_{23}=2$ and $c_{2,3}\cdot{N_3\choose2}=0$ have the same parity.\\
    When $j=2$ and $k=4$ we get that $M_{24}=2$ and $c_{2,4}\cdot{N_4\choose2}=0$ have the same parity.\\
    When $j=3$ and $k=4$ we get that $M_{34}=2$ and $c_{3,4}\cdot{N_4\choose2}=0$ have the same parity.\\
    For other pairs $j<k$ we get $M_{jk}=0$ and $c_{j,k}\cdot{N_k\choose2}=0$ have the same parity.\\
    Thus the associated Bott manifold admits a spin structure.\\

\item \begin{tabular}{p{6cm}c}
       $ C_2= \begin{pmatrix}
            0 & 1 & 1 & 1 & 1 & 0 \\
            0 & 0 & 1 & 0 & 0 & 1 \\
            0 & 0 & 0 & 0 & 1 & 1 \\
            0 & 0 & 0 & 0 & 0 & 0 \\
            0 & 0 & 0 & 0 & 0 & 0 \\
            0 & 0 & 0 & 0 & 0 & 0 \\
           \end{pmatrix} $
           &$D_{C_2} : \quad\vcenter{\hbox{\begin{tikzpicture}[roundnode/.style={circle, fill=black!70, inner sep=0pt, minimum size=4mm}]
	     \foreach \i [count=\ni] in {120, 60, ..., -180}
	     \node[circle,fill,inner sep=2pt, label=\i:{$u_\ni$}] at (\i:2cm) (u\ni) {};
 
			    \draw[->] (u1) -- (u2);
    			    \draw[->] (u1) -- (u5);
			    \draw[->] (u1) -- (u3);
			    \draw[->] (u1) -- (u4);
			    \draw[->] (u2) -- (u3);          
			    \draw[->] (u2) -- (u6);
			    \draw[->] (u3) -- (u5); 
			    \draw[->] (u3) -- (u6);
			  \end{tikzpicture} }} $  
      \end{tabular}\vspace{2mm}
\\
    Here we have $N_{D_{C_2}}^+(u_1)=\{u_2,u_3,u_4,u_5\}$ , $N_{D_{C_2}}^+(u_2)=\{u_3,u_6\}$ , $N_{D_{C_2}}^+(u_3)=\{u_5,u_6\}$ and $N_{D_{C_2}}^+(u_i)=\emptyset$ for $i=4,5,6$. \\
    Clearly, $N_i=|N_{D_{C_2}}^+(u_i)|$ is even for all $1\leq i\leq 6$.\\
    When $j=1$ and $k=2$ we have $M_{12}=|N_{D_{C_2}}^+(u_1)\cap N_{D_{C_2}}^+(u_2)|=|\{u_3\}|=1$ and $c_{1,2}\cdot{N_2\choose2}=1$ have the same parity.\\
    When $j=1$ and $k=3$ we have $M_{13}=|N_{D_{C_2}}^+(u_1)\cap N_{D_{C_2}}^+(u_3)|=|\{u_5\}|=1$ and $c_{1,3}\cdot{N_3\choose2}=1$ have the same parity.\\
    When $j=2$ and $k=3$ we have $M_{23}=|N_{D_{C_2}}^+(u_2)\cap N_{D_{C_2}}^+(u_3)|=|\{u_6\}|=1$ and $c_{2,3}\cdot{N_3\choose2}=1$ have the same parity.\\
    For other pairs $j<k$ we have $M_{jk}=0$ and $c_{j,k}\cdot{N_k\choose2}=0$ have the same parity.\\
    Thus the corresponding Bott manifold admits a spin structure.\\
    
\item \begin{tabular}{p{6cm}c}
       $C_3= \begin{pmatrix}
            0 & 0 & 1 & 0 & 1 \\
            0 & 0 & 1 & 1 & 0 \\
            0 & 0 & 0 & 1 & 1 \\
            0 & 0 & 0 & 0 & 0 \\
            0 & 0 & 0 & 0 & 0
           \end{pmatrix} $
           &
           $D_{C_3} :\quad\vcenter{\hbox{\begin{tikzpicture}[roundnode/.style={circle, fill=black!70, inner sep=0pt, minimum size=4mm}]
	    \foreach \i [count=\ni] in {120, 60, 0, -90 ,-180}
	    \node[circle,fill,inner sep=2pt, label=\i:{$u_\ni$}] at (\i:2cm) (u\ni) {};
 
                            \draw[->] (u1) -- (u3);
                            \draw[->] (u1) -- (u5);
                            \draw[->] (u2) -- (u3);
                            \draw[->] (u2) -- (u4);
                            \draw[->] (u3) -- (u4);
                            \draw[->] (u3) -- (u5);
                           \end{tikzpicture}}}$
           \end{tabular}\vspace{2mm}
\\
         Here we have $N_{D_{C_3}}^+(u_1)=\{u_3,u_5\}$ , $N_{D_{C_3}}^+(u_2)=\{u_3,u_4\}$ , $N_{D_{C_3}}^+(u_3)=\{u_4,u_5\}$ and $N_{D_{C_3}}^+(u_i)=\emptyset$ for $i=4,5$.\\
         Clearly, $N_i=|N_{D_{C_3}}^+(u_i)|$ is even for all $1\leq i\leq 4$.\\
         When $j=1$ and $k=2$ we get $M_{12}=|N_{D_{C_3}}^+(u_1)\cap N_{D_{C_3}}^+(u_2)|=\{u_3\}|=1$ and $c_{1,2}\cdot{N_2\choose2}=0$ do not have the same parity.\\
         Thus the associated Bott manifold does not admit a spin structure.\\
         
\item \begin{tabular}{p{6cm}c}
        $C_4= \begin{pmatrix}
            0 & 0 & 0 & 0 & 0 & 0 & 0 \\
            0 & 0 & 1 & 1 & 1 & 1 & 0\\
            0 & 0 & 0 & 1 & 1 & 1 & 1\\
            0 & 0 & 0 & 0 & 0 & 0 & 0\\
            0 & 0 & 0 & 0 & 0 & 1 & 1\\
            0 & 0 & 0 & 0 & 0 & 0 & 0\\
            0 & 0 & 0 & 0 & 0 & 0 & 0
           \end{pmatrix}$ 
           &
         $D_{C_4} :\quad\vcenter{\hbox{\begin{tikzpicture}[roundnode/.style={circle, fill=black!70, inner sep=0pt, minimum size=4mm}]
	  \foreach \i [count=\ni] in {135, 90, 45, 0, -60, -120 ,-180}
	  \node[circle,fill,inner sep=2pt, label=\i:{$u_\ni$}] at (\i:2cm) (u\ni) {};
 
			  \draw[->] (u2) -- (u3);
			  \draw[->] (u2) -- (u4);
			  \draw[->] (u2) -- (u5);
			  \draw[->] (u2) -- (u6);
			  \draw[->] (u3) -- (u4);
			  \draw[->] (u3) -- (u5);
			  \draw[->] (u3) -- (u6);
			  \draw[->] (u3) -- (u7);
			  \draw[->] (u5) -- (u6);
			  \draw[->] (u5) -- (u7);
                         \end{tikzpicture}}}$

         \end{tabular}\vspace{3mm}
         \\
         Here we have $N_{D_{C_4}}^+(u_2)=\{u_3,u_4,u_5,u_6\}$ , $N_{D_{C_4}}^+(u_3)=\{u_4,u_5,u_6,u_7\}$ , $N_{D_{C_4}}^+(u_5)=\{u_6,u_7\}$ and $N_{D_{C_4}}^+(u_i)=\emptyset$ for $i=1,4,6,7$\\
         Clearly, $N_i=|N_{D_{C_4}}^+(u_i)|$ is even for all $1\leq i\leq 6$\\
         When $j=2$ and $k=3$ we get $M_{23}=|N_{D_{C_4}}^+(u_2)\cap N_{D_{C_4}}^+(u_3)|=|\{u_3,u_5,u_6\}|=3$ and $c_{2,3}\cdot{N_3\choose2}=6$ do not have the same parity.\\
         Thus the associated Bott manifold does not admit a spin structure.
\end{enumerate}
\end{eg}

\section{Higher Stiefel-Whitney classes and Stiefel-Whitney numbers}
 We have the following result for the $(n-1)^\text{th}$ Stiefel-Whitney class of $Y_n$:
 
\begin{theorem}\label{n-1th sfclass} 
\begin{enumerate}
 \item[(i)] We have the following formula for $w_{n-1}(Y_n)$ in $H^{*}(Y_n;\mathbb Z_2)$ in terms of the Bott numbers $c_{i,j}$: \[ w_{n-1}(Y_n)=c_{1,2}\cdot c_{2,3}\cdots c_{n-1,n} \cdot x_{n+1}\cdot x_{n+2}\cdots x_{2n-1}.\]
 \item[(ii)] If $Y_n$ is an oriented real Rott manifold then $w_{n-1}(Y_n)=0$.
 \item[(iii)] We have $w_{n-1}(Y_n)=0$ if and only if there exists a pair of vertices $u_i,u_{i+1}$ in the digraph $D_C$ with no edge from $u_i$ to $u_{i+1}$.
 \end{enumerate}
\end{theorem}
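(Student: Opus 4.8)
The plan is to extract $w_{n-1}(Y_n)$ as the top nontrivial homogeneous component of the product formula for the total Stiefel--Whitney class established inside the proof of Theorem~\ref{spinresult}, and then to deduce (ii) and (iii) from the resulting closed formula. Recall from that proof that in $\mathcal R/\mathcal I$ one has $w(Y_n)=\prod_{j=2}^{n}(1+A_j)$, where $A_j:=\sum_{i=1}^{j-1}c_{i,j}\,x_{n+i}$ is the linear form of Corollary~\ref{wkrecursive}. Since the generators of $\mathcal I$ listed in~(\ref{ideal}) are homogeneous, $\mathcal R/\mathcal I$ is a graded ring and $w_{n-1}(Y_n)$ is exactly the degree-$(n-1)$ component of this product. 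As the product has $n-1$ factors and each contributes $1$ or a linear term, the only way to reach total degree $n-1$ is to select $A_j$ from every factor; hence $w_{n-1}(Y_n)=A_2A_3\cdots A_n$ in $\mathcal R/\mathcal I$, and it remains to evaluate this product.

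For that I would first record an auxiliary vanishing fact: for $1\le r\le n-1$, every monomial of degree $d>r$ in the variables $x_{n+1},\dots,x_{n+r}$ is zero in $\mathcal R/\mathcal I$. Indeed, by~(\ref{sqfree}) the relation $x_{n+i}^2=\sum_{j<i}c_{j,i}\,x_{n+i}x_{n+j}$ with $i\le r$ rewrites any occurrence of $x_{n+i}^2$ as a sum of degree-$2$ monomials still supported on $x_{n+1},\dots,x_{n+r}$; iterating this substitution (which terminates, exactly as in the proof of Theorem~\ref{simrf}(ii)) expresses the monomial as a $\mathbb Z_2$-combination of square-free monomials of the same degree $d$ in $r$ variables, of which there are none when $d>r$. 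Using this, I would prove by induction on $m$, for $2\le m\le n$, that
\[
A_2A_3\cdots A_m \;=\; c_{1,2}\,c_{2,3}\cdots c_{m-1,m}\cdot x_{n+1}x_{n+2}\cdots x_{n+m-1}.
\]
The base case $m=2$ is just $A_2=c_{1,2}x_{n+1}$. In the inductive step one multiplies the formula for $m$ by $A_{m+1}=\sum_{i=1}^{m}c_{i,m+1}x_{n+i}$: the summand $i=m$ produces $c_{m,m+1}\,x_{n+1}\cdots x_{n+m}$, while for each $i\le m-1$ the corresponding summand is a monomial of degree $m$ in the $m-1$ variables $x_{n+1},\dots,x_{n+m-1}$, hence vanishes by the auxiliary fact. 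Taking $m=n$ gives $w_{n-1}(Y_n)=c_{1,2}c_{2,3}\cdots c_{n-1,n}\cdot x_{n+1}x_{n+2}\cdots x_{2n-1}$, which is (i). (Alternatively, one can run this induction directly through the recursion~(\ref{rf2}), using that the top class $w_{n-1}(Y_{n-1})$ of the $(n-1)$-manifold $Y_{n-1}$ vanishes since $\chi(Y_{n-1})=0$.)

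If $Y_n$ is orientable, then by Proposition~\ref{maskamorientable} every row sum of $C$ is even; the $(n-1)$st row has $c_{n-1,n}$ as its only possibly nonzero entry, so $c_{n-1,n}=0$, and therefore $c_{1,2}c_{2,3}\cdots c_{n-1,n}=0$, whence $w_{n-1}(Y_n)=0$ by (i); this proves (ii). For (iii), note that by Proposition~\ref{cohoring} (equivalently, by the presentation of $H^*(Y_n;\mathbb Z_2)$ in \cite[Section~2]{masudakamishimacohorigid}) the square-free monomials in $x_{n+1},\dots,x_{2n}$ form a $\mathbb Z_2$-basis of $H^*(Y_n;\mathbb Z_2)$, so in particular $x_{n+1}x_{n+2}\cdots x_{2n-1}\neq 0$. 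Hence by (i) we have $w_{n-1}(Y_n)=0$ if and only if $c_{1,2}c_{2,3}\cdots c_{n-1,n}=0$, i.e.\ if and only if $c_{i,i+1}=0$ for some $1\le i\le n-1$; by the definition of the adjacency matrix of $D_C$, this is precisely the statement that for some $i$ there is no edge from $u_i$ to $u_{i+1}$.

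The main obstacle is the evaluation of $A_2\cdots A_n$ in (i): one must be sure that, after reduction modulo $\mathcal I$, the only surviving term is the fully square-free monomial $x_{n+1}\cdots x_{2n-1}$ with the claimed coefficient. What makes this work is that the square relations~(\ref{sqfree}) are degree-preserving and only ever introduce variables $x_{n+j}$ of strictly smaller index, so that a product of linear forms $A_j$ that stays within too few of the $x_{n+i}$'s is forced to collapse; packaging this as the auxiliary vanishing fact reduces the computation to a short induction.
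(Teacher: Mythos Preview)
Your argument is correct and is essentially the same as the paper's. The paper proves (i) by induction on $n$ via the recursion~(\ref{rf2}) together with the specific relations $x_{n+1}^2=0,\ x_{n+1}x_{n+2}^2=0,\ \ldots,\ x_{n+1}\cdots x_{2n-2}^2=0$; your product formula $w_{n-1}(Y_n)=A_2\cdots A_n$ is the unwound form of that recursion, and your auxiliary vanishing fact is exactly the common generalization of those relations. Parts (ii) and (iii) are argued identically in both, except that you make explicit the nonvanishing of $x_{n+1}\cdots x_{2n-1}$, which the paper leaves implicit.
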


\begin{proof} 
 The proof of (i) follows by induction on $n$ using (\ref{rf2}) and the fact that in $H^*(Y_n;\mathbb Z_2)$ the following relations hold : \begin{equation}\label{sqrelns} 
 x_{n+1}^2=0 ;\, x_{n+1}\cdot x_{n+2}^2=0;\, x_{n+1}\cdot x_{n+2}\cdot x_{n+3}^2=0;\, \cdots ;\,x_{n+1}\cdot x_{n+2}\cdots x^2_{2n-2}=0.                                                                                                                                      \end{equation}
 From (\ref{orientable}), $c_{n-1,n}=0$ if $Y_n$ is orientable. Hence (ii) follows from (i). Also (iii) follows from (i) and the definition of $D_C$.
\end{proof}

\begin{remark}
 The assertion (ii) of Theorem \ref{n-1th sfclass} is true for any even dimensional manifold but not in general true when the dimension is odd (see \cite[Theorem II and examples on p. 94]{masseystiefel}). 
\end{remark}

\begin{remark}
 We hope to compute closed formulae for $w_k(Y_n)$, for $k\geq 3$ and also characterize their vanishing in terms of the corresponding digraph. For instance when $Y_n$ is orientable, by Wu's formula \cite[p. 96]{milnorcclasses}, $w_3(Y_n)=\operatorname{Sq}^1(w_2(Y_n))$. In particular, if $w_1(Y_n)=w_2(Y_n)=0$ then $w_3(Y_3)=0$.
\end{remark}

\subsection{Real Bott manifolds bound}
\begin{definition}\label{ncBT} 
 A compact $n$-dimensional manifold $M$ without boundary is null-cobordant if it is diffeomorphic to the boundary of some compact smooth $(n+1)$-dimensional manifold $\mathcal{W}$ with boundary.  
\end{definition}

Let $w_{k}:=w_{k}(Y_n)$ for $1\leq k\leq n$. Also let $\mu_{Y_n}$ denote the fundamental class of $Y_n$ in $H_n(Y_n;\mathbb Z_2)$. Then
\begin{equation}\label{swnos} \langle w_1^{r_1} \cdots w_n^{r_n},\mu_{Y_n}\rangle \in \mathbb Z_2 \end{equation}
such that $\sum_{i=1}^n i\cdot r_i=n$ are the Steifel-Whitney numbers of $Y_n$.

\begin{theorem}\label{vanishswnosBT}
 Any real Bott manifold is null-cobordant.  
\end{theorem}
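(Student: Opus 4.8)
The plan is to show that every Stiefel–Whitney number of $Y_n$ vanishes and then invoke Thom's theorem, which states that a closed smooth manifold is null-cobordant in the unoriented cobordism ring if and only if all of its Stiefel–Whitney numbers are zero. So the entire task reduces to evaluating $\langle w_1^{r_1}\cdots w_n^{r_n},\mu_{Y_n}\rangle$ for every tuple $(r_1,\dots,r_n)$ with $\sum_i i\,r_i=n$ and checking these all vanish in $\mathbb Z_2$.

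The key computational input is the description of $H^*(Y_n;\mathbb Z_2)$ and the Stiefel–Whitney classes from Section 2. By Theorem \ref{simrf}(ii), each $w_k(Y_n)$ is a $\mathbb Z_2$-linear combination of square-free monomials of degree $k$ in the variables $x_{n+1},\dots,x_{2n-1}$ modulo $\mathcal I$. Hence any product $w_1^{r_1}\cdots w_n^{r_n}$ of total degree $n$ is a $\mathbb Z_2$-linear combination of monomials of degree $n$ in $x_{n+1},\dots,x_{2n-1}$; crucially, there are only $n-1$ such variables, so every monomial of degree $n$ in them must contain some $x_{n+i}^2$. Using the relation (\ref{sqfree}), namely $x_{n+i}^2=\sum_{j=1}^{i-1}c_{j,i}\,x_{n+i}x_{n+j}$, one repeatedly rewrites squares; I would argue that iterating this reduction on a degree-$n$ monomial in $n-1$ variables always lands one in a situation forcing a further square, ultimately yielding $0$. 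More conceptually: the top cohomology $H^n(Y_n;\mathbb Z_2)$ is one-dimensional, spanned by, say, $x_{n+1}x_{n+2}\cdots x_{2n-1}\cdot x_{2n}$ (or equivalently a single fundamental monomial), and one shows that the subalgebra generated by $x_{n+1},\dots,x_{2n-1}$ — which is where all the $w_k$ live — has trivial intersection with $H^n$. Indeed, the recursive structure $H^*(Y_n;\mathbb Z_2)\cong H^*(Y_{n-1};\mathbb Z_2)[x_n,x_{2n}]/J$ from (\ref{presind}) shows that a generator of $H^n(Y_n;\mathbb Z_2)$ necessarily involves $x_n$ or $x_{2n}$, while by Corollary \ref{wkrecursive} and Theorem \ref{simrf}(ii) every $w_k(Y_n)$, and hence every product of them, is expressible without $x_n,x_{2n}$ after reduction; this mismatch forces every Stiefel–Whitney number to be zero.

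Concretely I would carry this out by induction on $n$: the base cases $n=1,2$ are immediate ($S^1$ and the torus/Klein bottle all bound). For the inductive step, using (\ref{rf2}), write each $w_k(Y_n)=w_k(Y_{n-1})+w_{k-1}(Y_{n-1})\cdot A_n$ with $A_n=\sum_{i=1}^{n-1}c_{i,n}x_{n+i}$, expand a product $w_1^{r_1}\cdots w_n^{r_n}$ accordingly, and observe that after substituting (\ref{redvar}) every term is a polynomial purely in $x_{n+1},\dots,x_{2n-1}$ of degree $n$; since there are only $n-1$ of these variables, apply (\ref{sqfree}) to eliminate the unavoidable square. One then checks that this elimination process never terminates with a nonzero element of the one-dimensional space $H^n(Y_n;\mathbb Z_2)$, because the distinguished nonzero degree-$n$ class pairs nontrivially with $\mu_{Y_n}$ only through a monomial involving a variable outside $\{x_{n+1},\dots,x_{2n-1}\}$. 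The main obstacle is making this last step fully rigorous: one must pin down a precise normal form for elements of $\mathcal R/\mathcal I$ in degree $n$ and verify that the subspace spanned by square-free monomials in $x_{n+1},\dots,x_{2n-1}$, closed under the quadratic reductions (\ref{sqfree}), maps to zero in $H^n(Y_n;\mathbb Z_2)$. Once that algebraic fact is established, Thom's theorem finishes the proof.
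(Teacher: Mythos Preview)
Your approach is exactly the paper's: reduce via Theorem \ref{simrf}(ii) and the relations (\ref{sqfree}) and then invoke Thom. The step you flag as the ``main obstacle'' is in fact the whole of the paper's proof and is immediate: the very same reduction (\ref{sqfree}) that establishes Theorem \ref{simrf}(ii) shows (by induction on the index $i$, since $x_{n+1}^2=0$ and each $x_{n+i}^2$ is rewritten using only smaller-index variables) that \emph{any} homogeneous polynomial in $x_{n+1},\dots,x_{2n-1}$ reduces in $\mathcal R/\mathcal I$ to a $\mathbb Z_2$-combination of square-free monomials of the same degree in those $n-1$ variables; in degree $n$ there are none, so $w_1^{r_1}\cdots w_n^{r_n}=0$. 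Your induction on $n$, the discussion of $x_n,x_{2n}$, and the normal-form analysis of $H^n$ are all unnecessary detours.
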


\begin{proof}
 From the second part of Theorem \ref{simrf}, it follows that, any monomial $w_1^{r_1}\cdots w_n^{r_n}$ of total dimension $n$ in $\mathcal {R/I}$ is a $\mathbb Z_2$-linear combination of square free monomials of degree $n$ in $x_{n+1},x_{n+2},\ldots, x_{2n-1}$. But there are no square free monomials of degree $n$ in $x_{n+j}, ~1\leq j\leq n-1$.  Thus the monomial $w_1^{r_1}\cdots w_n^{r_n}=0$ in $H^n(Y_n;\mathbb Z_2)$ so that the associated Stiefel-Whitney number is zero. Therefore by Thom's theorem it follows that $Y_n$ is null-cobordant.
\end{proof}

\begin{definition}\label{oncBT}
 A compact oriented $n$-dimensional manifold $M'$ without boundary is orientedly null-cobordant if it is diffeomorphic to the boundary of some compact smooth $(n+1)$-dimensional oriented manifold $\mathcal{W'}$ with boundary.
\end{definition}

Let $Y_{n}$ denote an oriented $n$-dimensional real Bott manifold.  Let $p_{i}:=p_i(Y_{n})$ denote the $i$th Pontrjagin class of $Y_{n}$ in $H^{4i}(Y_{n},\mathbb Z)$ and $\mu_{Y_{n}}$ denote the fundamental homology class in $H_{n}(Y_{n},\mathbb Z)$. Then for each $I=i_1,\ldots,i_r$ a partition of $k$, the $I$th Pontrjagin number of $Y_{n}$ is given by
\begin{equation}\label{pnos} \langle p_{i_1}\cdots p_{i_r},\mu_{Y_{n}}\rangle\in \mathbb Z\end{equation}
when $n=4k$. It is zero when $n$ is not divisible by $4$. Here $v[Y_n]:=\langle v,\mu_{Y_n}\rangle$ denotes the Kronecker index of any $v\in H^n(Y_n;\mathbb Z)$ (see \cite[p. 185]{milnorcclasses}).

\begin{corollary}\label{orcoborBT}
 Any oriented real Bott manifold is orientedly null-cobordant.
\end{corollary}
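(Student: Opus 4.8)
The plan is to invoke the classical theorem of Wall: a closed oriented $n$-manifold is orientedly null-cobordant (i.e. bounds a compact oriented manifold) if and only if all of its Stiefel--Whitney numbers and all of its Pontrjagin numbers vanish. Thus I would reduce the statement to checking these two families of characteristic numbers separately for an oriented real Bott manifold $Y_n$.

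First I would dispatch the Stiefel--Whitney numbers. These already vanish by Theorem \ref{vanishswnosBT} (and its proof), which shows that every degree-$n$ monomial $w_1^{r_1}\cdots w_n^{r_n}$ is a $\mathbb Z_2$-linear combination of square-free monomials of degree $n$ in the $n-1$ variables $x_{n+1},\ldots,x_{2n-1}$, of which there are none; hence every such monomial is zero in $H^n(Y_n;\mathbb Z_2)$ and the corresponding Stiefel--Whitney number is $0$. This holds whether or not $Y_n$ is oriented, so in particular it holds in the oriented case at hand.

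Next I would handle the Pontrjagin numbers. If $n$ is not divisible by $4$ there is nothing to prove, since in that case all Pontrjagin numbers vanish for dimensional reasons as noted after (\ref{pnos}). If $n=4k$, then each Pontrjagin number is a Kronecker index of a product $p_{i_1}\cdots p_{i_r}$ with $\sum i_j = k$, and hence involves at least one factor $p_i(Y_n)$ with $i\geq 1$. By \cite[Corollary 6.8(i)]{davisjanus} (already used in Remark \ref{4stepegrem}), the Pontrjagin classes of a real Bott manifold vanish: $p_i(Y_n)=0$ in $H^{4i}(Y_n;\mathbb Z)$ for all $i\geq 1$. Therefore every product $p_{i_1}\cdots p_{i_r}$ is zero in $H^n(Y_n;\mathbb Z)$, so all Pontrjagin numbers of $Y_n$ vanish. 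Combining the two paragraphs, every Stiefel--Whitney number and every Pontrjagin number of the oriented real Bott manifold $Y_n$ is zero, and by Wall's theorem $Y_n$ is orientedly null-cobordant.

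The only real input beyond what is developed in the paper is Wall's characterization of the oriented cobordism ring by Stiefel--Whitney and Pontrjagin numbers, so the main (minor) obstacle is simply citing that result correctly; all the computational content is already supplied by Theorem \ref{vanishswnosBT} and the vanishing of Pontrjagin classes from \cite{davisjanus}.
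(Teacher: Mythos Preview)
Your proposal is correct and follows essentially the same route as the paper's proof: invoke Wall's theorem, use Theorem \ref{vanishswnosBT} for the vanishing of the Stiefel--Whitney numbers, and use \cite[Corollary 6.8(i)]{davisjanus} for the vanishing of the Pontrjagin classes (hence numbers). The paper's argument is just a terser version of what you wrote.
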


\begin{proof}
 Note that \cite[Corollary 6.8(i)]{davisjanus}, implies that all the Pontrjagin numbers of $Y_{n}$ vanish. Moreover, we have shown above in the proof of Theorem \ref{vanishswnosBT} that all the Stiefel-Whitney numbers of $Y_{n}$ vanish. Thus the corollary follows by Wall's theorem \cite[Section 8, Corollary 1]{wallcobordism}.
\end{proof}

\begin{remark}
 More generally, it follows from \cite[Corollary 6.8(i)]{davisjanus} and Wall's theorem that if an orientable small cover is null cobordant then it is orientedly null cobordant.
\end{remark}

\begin{remark}
 There are examples of small covers whose top Stiefel-Whitney class does not vanish. For example the non-orientable surfaces of odd genus ( see \cite[Example 1.20]{davisjanus}) have non-vanishing second Stiefel-Whitney class.
\end{remark}

\begin{remark}
 We wish to mention here that Theorem \ref{vanishswnosBT} was proved jointly with V Uma in \cite[Theorem 4.24]{umaraisa}. Indeed a more recent preprint has appeared where the same result is proved in \cite[Theorem 2.5]{lucobordism}. Also using similar techniques L\"{u} extends this result to small covers over $P^n\times\Delta^1$ where $P^n$ is a product of simplices (see \cite[Theorem 3.4]{lucobordism}). From \cite{lucobordism} we came to know that Cheng and Wang \cite{chenwang} and L\"{u} and Tan \cite{lutan} have proved that any real Bott manifold bounds \emph{equivariantly} using different methods. However our Corollary \ref{orcoborBT} regarding oriented cobordism is new and could not be explicitly found in the papers \cite{chenwang, lutan, lucobordism}
\end{remark}

\noindent
{\bf Acknowledgement:} I am grateful to my advisor V. Uma for her valuable guidance. I also thank Prof. P. Sankaran for valuable discussions, a careful reading of the manuscript and his comments and suggestions and Prof. M. Masuda for his valuable comments and suggestions on earlier versions of the manuscript. I finally thank University Grants Commission (UGC), India for financial assistance.

\bibliographystyle{siam}
\bibliography{refs_raisa}

\begin{thebibliography}{10}

\bibitem{chenwang}
{\sc Y.~Chen and Y.~Wang}, {\em The number of small covers over cubes and the
  product of at most three simplices up to equivariant cobordism}, Proceedings
  of the Japan Academy, Series A, Mathematical Sciences, 87 (2011), pp.~95--98.

\bibitem{masudachoidigraph}
{\sc S.~Choi, M.~Masuda, and S.~I. Oum}, {\em Classification of real {B}ott
  manifolds and acyclic digraphs}, Transactions of the American Mathematical
  Society, 369 (2017), pp.~2987--3011.

\bibitem{masudachoigeneralized}
{\sc S.~Choi, M.~Masuda, and D.~Y. Suh}, {\em Topological classification of
  generalized {B}ott towers}, Transactions of the American Mathematical
  Society, 362 (2010), pp.~1097--1112.

\bibitem{civanyusuf}
{\sc Y.~Civan}, {\em Bott towers, crosspolytopes and torus actions}, Geometriae
  Dedicata, 113 (2005), pp.~55--74.

\bibitem{civanray}
{\sc Y.~Civan and N.~Ray}, {\em Homotopy decompositions and {K}-theory of
  {B}ott towers}, {K}-theory, 34 (2005), pp.~1--33.

\bibitem{davisjanus}
{\sc M.~W. Davis and T.~Januszkiewicz}, {\em Convex polytopes, {C}oxeter
  orbifolds and torus actions}, Duke Mathematical Journal, 62 (1991),
  pp.~417--451.

\bibitem{umaraisa}
{\sc R.~Dsouza and V.~Uma}, {\em Some results on the topology of real {B}ott
  towers}, arXiv preprint arXiv:1609.05630,  (2016).

\bibitem{gasiorspin}
{\sc A.~G{\c a}sior}, {\em Spin-structures on real bott manifolds}, Journal of
  the Korean Mathematical Society, 54 (2017), pp.~507--516.

\bibitem{grossbergkarshon}
{\sc M.~Grossberg and Y.~Karshon}, {\em Bott towers, complete integrability,
  and the extended character of representations}, Duke Mathematical Journal, 76
  (1994), pp.~23--58.

\bibitem{hirzebruchhopf}
{\sc F.~Hirzebruch and H.~Hopf}, {\em Felder von {F}l{\"a}chenelementen in
  4-dimensionalen {M}annigfaltigkeiten}, Mathematische Annalen, 136 (1958),
  pp.~156--172.

\bibitem{jurkiewiczthesis}
{\sc J.~Jurkiewicz}, {\em Torus embeddings, polyhedra, $k^*$-actions and
  homology},  (1985).

\bibitem{masudakamishimacohorigid}
{\sc Y.~Kamishima and M.~Masuda}, {\em Cohomological rigidity of real {B}ott
  manifolds}, Algebraic \& Geometric Topology, 9 (2009), pp.~2479--2502.

\bibitem{krupkahandbook}
{\sc J.~Korba\v{s}}, {\em Distributions, vector distributions, immersions of
  manifolds in {E}uclidean spaces}, in Handbook of Global Analysis, D.~Krupka
  and D.~Saunders, eds., 2008, ch.~13, pp.~665--724.

\bibitem{lawsonspingeometry}
{\sc H.~B. Lawson and M.~L. Michelsohn}, {\em Spin Geometry (PMS-38)}, vol.~38,
  Princeton university press, 2016.

\bibitem{lucobordism}
{\sc Y.~L{\"u}}, {\em On cobordism of generalized (real) {B}ott manifolds},
  arXiv preprint arXiv:1710.00562,  (2017).

\bibitem{lutan}
{\sc Z.~L{\"u} and Q.~Tan}, {\em Small covers and the equivariant bordism
  classification of 2-torus manifolds}, International Mathematics Research
  Notices, 2014 (2013), pp.~6756--6797.

\bibitem{masseystiefel}
{\sc W.~Massey}, {\em On the {S}tiefel-{W}hitney classes of a manifold},
  American Journal of Mathematics, 82 (1960), pp.~92--102.

\bibitem{masudapanovsemifree}
{\sc M.~Masuda and T.~E. Panov}, {\em Semifree circle actions, {B}ott towers
  and quasitoric manifolds}, Sbornik: Mathematics, 199 (2008), p.~1201.

\bibitem{milnorcclasses}
{\sc J.~Milnor and J.~D. Stasheff}, {\em Characteristic Classes.(AM-76)},
  vol.~76, Princeton university press, 2016.

\bibitem{naknishiorientability}
{\sc H.~Nakayama and Y.~Nishimura}, {\em The orientability of small covers and
  coloring simple polytopes}, Osaka Journal of Mathematics, 42 (2005),
  pp.~243--256.

\bibitem{nazradiffeo}
{\sc A.~Nazra}, {\em Diffeomorphism classes of real {B}ott manifolds}, Tokyo
  Journal of Mathematics, 34 (2011), pp.~229--260.

\bibitem{umafundamental}
{\sc V.~Uma}, {\em On the fundamental group of real toric varieties}, in
  Proceedings of the Indian Academy of Sciences-Mathematical Sciences,
  vol.~114, Springer, 2004, pp.~15--31.

\bibitem{wallcobordism}
{\sc C.~Wall}, {\em Determination of the cobordism ring}, Annals of
  Mathematics,  (1960), pp.~292--311.

\end{thebibliography}

\end{document}